\DeclareMathSymbol{\Alpha}{\mathalpha}{operators}{"41}
\DeclareMathSymbol{\Beta}{\mathalpha}{operators}{"42}
\DeclareMathSymbol{\Epsilon}{\mathalpha}{operators}{"45}
\DeclareMathSymbol{\Zeta}{\mathalpha}{operators}{"5A}
\DeclareMathSymbol{\Eta}{\mathalpha}{operators}{"48}
\DeclareMathSymbol{\Iota}{\mathalpha}{operators}{"49}
\DeclareMathSymbol{\Kappa}{\mathalpha}{operators}{"4B}
\DeclareMathSymbol{\Mu}{\mathalpha}{operators}{"4D}
\DeclareMathSymbol{\Nu}{\mathalpha}{operators}{"4E}
\DeclareMathSymbol{\Omicron}{\mathalpha}{operators}{"4F}
\DeclareMathSymbol{\Rho}{\mathalpha}{operators}{"50}
\DeclareMathSymbol{\Tau}{\mathalpha}{operators}{"54}
\DeclareMathSymbol{\Chi}{\mathalpha}{operators}{"58}
\DeclareMathSymbol{\omicron}{\mathord}{letters}{"6F}
\newcommand\la{\langle}
\newcommand\ra{\rangle}
\newcommand\Z{{\mathbb Z}}
\newcommand\ZZ{\widehat{\mathbb Z}}
\newcommand\F{{\mathbb F}}
\newcommand\cR{{\mathcal R}}
\newcommand\cP{{\mathscr P}}
\newcommand\Ra{\Rightarrow}
\newcommand\SL{\operatorname{SL}}
\newcommand\Sp{\operatorname{Sp}}
\newcommand\hookra{\hookrightarrow}
\renewcommand{\hom}{\operatorname{Hom}}
\newcommand\sr{\stackrel}
\newcommand\ssm{\smallsetminus}
\newcommand\ol{\overline}
\newcommand\cC{{\mathscr C}}
\newcommand\cS{{\mathscr S}}
\newcommand\cT{{\mathcal T}}
\newcommand\cI{{\mathcal I}}
\newcommand\G{\Gamma}
\newcommand\U{\Upsilon}
\newcommand\hG{{\widehat{G}}}
\newcommand\ld{\lambda}
\newcommand\Ld{\Lambda}
\newcommand\wh{\widehat}
\newcommand\td{\tilde}
\def\co{\colon\thinspace}
\newcommand\SO{\operatorname{SO}}
\newcommand\cd{\operatorname{cd}}
\newcommand\vcd{\operatorname{vcd}}
\def\wC{{\widehat{\cC}}}
\def\u{{\upsilon}}
\def\dS{{\partial S}}
\newenvironment{customtheorem}[1]
  {\innercustomthm}
  {\endinnercustomthm}
\newenvironment{customcorollary}[1]
  {\innercustomcor}
  {\endinnercustomcor}
\newtheorem{theorem}{Theorem}[section]
\newtheorem{proposition}[theorem]{Proposition}
\newtheorem{lemma}[theorem]{Lemma}
\theoremstyle{definition}
\newtheorem{definition}[theorem]{Definition}   
\newtheorem{remark}[theorem]{Remark}
\begin{document} 

\title{Finite subgroups of the profinite completion of good groups}

\author{Marco Boggi}
\address{UFF - Instituto de Matem\'atica e Estat\'{\i}stica -
Niter\'oi - RJ 24210-200; Brazil}
\email{marco.boggi@gmail.com}

\author{Pavel Zalesskii}
\address{Departamento de Matem\'atica, Universidade de Bras\'{\i}lia, 
70910-900, Bras\'{\i}lia-DF; Brazil.} 
\email{pz@mat.unb.br}\thanks{The second author was partially supported  by FAPDF and CNPq.}

\begin{abstract}Let $G$ be a residually finite, good group of finite virtual cohomological dimension. 
We prove that  the natural monomorphism $G\hookra\hG$ induces a bijective correspondence 
between conjugacy classes of finite $p$-subgroups of $G$ and those of its profinite completion $\hG$.
Moreover, we prove that the centralizers and normalizers in $\hG$ of finite $p$-subgroups of $G$ 
are the closures of the respective centralizers and normalizers in $G$. 
With somewhat more restrictive hypotheses,  we prove the same results for finite solvable subgroups of $G$. 
In the last section, we give a few applications of this theorem to hyperelliptic mapping class groups and virtually 
compact special toral relatively hyperbolic groups (these include fundamental 
groups  of $3$-orbifolds and of uniform standard arithmetic hyperbolic orbifolds).
\vskip 0.2cm
\noindent AMS Math Classification: 20E18, 20E26, 20F65.

\end{abstract}

\maketitle

\section{Introduction}
Let $G$ be a residually finite group, $\hG$ its profinite completion and $\iota\co G\hookra\hG$ the natural 
monomorphism. The map $\iota$ induces a map $\iota_c$ from the set of conjugacy classes 
of finite subgroups of $G$ to the set of conjugacy classes of finite subgroups of $\hG$. A natural question is then whether this map
is injective, in which case we say that $\iota$ separates conjugacy classes of finite subgroups, and whether
it is surjective, i.e.\ whether every finite subgroup of $\hG$ is conjugated to a finite subgroup of $\iota(G)$
(from now on, we will simply denote by $G$ the image of $\iota$). Of course, the question can be formulated for a residually $\cC$ group $G$ and the
pro-$\cC$ completion $G_\wC$ respectively, where $\cC$ is a class of finite groups closed for subgroups, quotients and extensions. 
The classes of finite $p$-groups or finite solvable groups are of particular interest. As matter of notation, a \emph{$\cC$-group} is a group in $\cC$. 

The problem of whether $\iota_c$ is injective is the restriction of the subgroup conjugacy separability problem, namely of whether two distinct
conjugacy classes of subgroups of $G$ have distinct images in some finite quotient of $G$, to finite subgroups of $G$. 
Examples of subgroup conjugacy separable groups are virtually polycyclic groups (cf.\ \cite{GS}), virtually free groups (cf.\ \cite{CZ-15}) and hyperbolic 
virtually compact special groups (in particular, Fuchsian and  one-relator groups with torsion) (cf.\ \cite{CZ-19}). In particular, for all these groups, the map 
$\iota_c$ is injective.

On the other hand, the problem of whether the map $\iota_c$ is surjective has not been much studied. 
This is true for finitely generated nilpotent groups (since, in this case, torsion elements form a finite normal subgroup), for virtually free groups 
by \cite[Theorem 3.10]{ZM} (cf.\ also \cite[Theorem 1.2]{R}). Kropholler and Wilson \cite[Theorem 2.7]{KW} proved the surjectivity of $\iota_c$ 
for residually finite solvable minimax (in particular, polycyclic) groups and constructed counterexamples for infinitely generated nilpotent groups 
of class $2$ and for finitely generated center-by-metabelian groups.

In general, the answer is negative and it is not difficult to give some examples. For instance, for $n\geq 2$,
there holds $\wh{\Sp_{2n}(\Z)}\cong\Sp_{2n}(\ZZ)$ (cf. \cite{BMS}) and the natural homomorphism
$\iota\co\Sp_{2n}(\Z)\to\Sp_{2n}(\ZZ)$ is injective. However, the center of $\Sp_{2n}(\Z)$ consists of the scalar
matrices $\pm{\mathrm I}$ and is a finite subgroup of order $2$, while the center of $\Sp_{2n}(\ZZ)\cong\prod_p\Sp_{2n}(\Z_p)$ contains elementary 
abelian $2$-subgroups of any order. Thus $\iota_c$ in this case is not surjective. In fact, this is the case for any arithmetic group for which the congruence 
subgroup problem has a positive solution.

The reason for which the linear group $\Sp_{2n}(\Z)$ and, in general, arithmetic groups with a positive solution to the congruence subgroup problem fail 
to satisfy the desired property is that these groups are not \emph{good} (cf.\ \cite[Proposition 5.1 and Remark 5.3]{GJZ}). Let us recall this notion.

In this paper, for a given profinite group $R$ and prime number $p>0$, the coefficients for the cohomology of $R$ will be in the category 
of \emph{discrete  $\F_p[[R]]$-modules}, where $\F_p[[R]]$ is the completed group algebra of $R$ with $\F_p$-coefficients. 
A discrete $\F_p[[R]]$-module is equivalently described as an $\F_p$-vector space, with the discrete topology, endowed with a continuous linear action 
of the profinite group $R$.

\begin{definition}\label{good group}Let $p$ be a prime such that $\Z/p\in\cC$.
We say that a group $G$ is \emph{$p$-good} in $\cC$ if, for every discrete $\F_p[[G_\wC]]$-module $M$, 
the homomorphism induced on cohomology $\iota^\ast\co H^i(G_\wC,M)\to H^i(G,M)$ is an isomorphism for all $i\geq 0$.
If, for all primes $p$ such that $\Z/p\in\cC$, the group $G$ is $p$-good in $\cC$ we say that $G$ is \emph{$\cC$-good}. 
If $\cC$ is the class of all finite groups, we just say that the group $G$ is \emph{good}.
\end{definition}

We will also need to impose some finiteness condition on cohomology:

\begin{definition}\label{fct}We say that a group $G$ is of \emph{finite virtual $p$-cohomological type} if:
\begin{enumerate}
\item $G$ has finite virtual $p$-cohomological dimension (briefly, $\vcd_p(G)<\infty$);
\item for every finite $\F_p[G]$-module $M$, the cohomology $H^i(G,M)$ is finite for $i\geq 0$.
\end{enumerate}
A group $G$ is of \emph{finite virtual cohomological type} if it is of finite virtual $p$-cohomological type for all primes $p>0$.
\end{definition}

Our first result is that $p$-goodness of a group $G$ of finite virtual $p$-cohomological type implies that $\iota_c$ is a bijection
when restricted to conjugacy classes of $p$-subgroups of $G$. 

Let us fix some terminology. For a group $L$, let us denote by $\cS_f(L)$, $\cS_{f\!s}(L)$ and $\cS_p(L)$, respectively, the set of finite,
finite solvable and finite $p$ (for a fixed prime $p$) subgroups of $L$. The group $L$ acts naturally on each of these sets by conjugation. 
We then denote by $\cS_f(L)/_{\sim}$, $\cS_{f\!s}(L)/_{\sim}$ and $\cS_p(L)/_{\sim}$, the respective quotients.

For a profinite group $R$, let us denote by $\cS(R)$ and $\cS(R)/_{\sim}$, respectively, the set of \emph{closed} subgroups of $R$ 
and the quotient of this set by conjugation. It is easy to see that both $\cS(R)$ and $\cS(R)/_{\sim}$ are profinite sets. We then endow the subsets 
$\cS_f(R)$, $\cS_{f\!s}(R)$, $\cS_p(R)$ of $\cS(R)$ and the subsets $\cS_f(R)/_{\sim}$, $\cS_{f\!s}(R)/_{\sim}$, $\cS_p(R)/_{\sim}$ of $\cS(R)/_{\sim}$ 
with the induced topologies. 

In general, the above subsets of finite subgroups are not closed in $\cS(R)$ and $\cS(R)/_{\sim}$, respectively, and so they are not profinite sets.
However, let us observe that the set $\cS_f(R)_{\leq k}$ (resp.\  $\cS_{f\!s}(R)_{\leq k}$, $\cS_p(R)_{\leq k}$) of finite 
(resp.\  finite solvable, finite $p$) subgroups of the profinite group $R$ of cardinality $\leq k$, for a fixed $k>0$, is a profinite set.  
Therefore, if the order of finite subgroups in $R$ is bounded by some constant (e.g.\ $R$ is virtually torsion free), the sets
$\cS_f(R)$, $\cS_{f\!s}(R)$ and $\cS_p(R)$ are profinite and then closed in $\cS(R)$. The same holds for 
the subsets $\cS_f(R)/_{\sim}$, $\cS_{f\!s}(R)/_{\sim}$ and $\cS_p(R)/_{\sim}$ of $\cS(R)/_{\sim}$.

A monomorphism $\iota\co G\hookra R$ of an abstract group $G$ in a profinite group $R$ induces an inclusion $\iota_f\co\cS_f(G)\hookra\cS_f(R)$. 
We endow $\cS_f(G)$ (resp.\ $\cS_{f\!s}(G)$ and $\cS_p(G)$) with the topology induced by this embedding and $\cS_f(G)/_{\sim}$ 
(resp.\ $\cS_{f\!s}(G)/_{\sim}$ and $\cS_p(G)/_{\sim}$) with the quotient topology. 
The embedding $\iota_f$ then induces a continuous map $\iota_c\co\cS_f(G)/_{\sim}\to\cS_f(R)/_{\sim}$ which is not necessarily an embedding.

Let us fix some more terminology.
For a subgroup $H$ of $G$, we denote by $N_G(H)$ and $C_G(H)$, respectively, the normalizer and centralizer of $H$ in $G$.
For a monomorphism $G\hookra R$ in a profinite group, we denote by $\ol{G}$ the closure of $G$ in $R$
for the profinite topology. We say that a property holds for $i\gg0$, if, for some positive integer $k$, the property holds for all $i>k$.
We then have (cf.\ Theorem~\ref{pcase} for the complete statement):

\begin{customtheorem}{A}For a fixed prime $p>0$, let $G$ be a group of finite virtual $p$-cohomological type. 
Let us assume that, for a monomorphism $\iota\co G\hookra R$ into a profinite group $R$ with dense image, 
the natural induced map $H^i(R,M)\to H^i(G, M)$ is an isomorphism for every discrete $\F_p[[R]]$-module $M$ and $i\gg0$. Then: 
\begin{enumerate}
\item $\iota$ induces a bijection of finite sets $\iota_c\co\cS_p(G)/_{\sim}\sr{\sim}{\to}\cS_p(R)/_{\sim}$. 
\item $p$-torsion elements of $G$ are $\cR$-conjugacy distinguished (cf.\ Definition~\ref{distconjsep}).
\item For every finite $p$-subgroup $H$ of $G$, we have the identities $N_{R}(H)=\ol{N_G(H)}$ and $C_{R}(H)=\ol{C_G(H)}$.
\end{enumerate}
\end{customtheorem}

\begin{remark}\label{Symonds}The above theorem should be compared with Symonds' classical result on cohomology isomorphisms of groups 
(cf.\ \cite[Theorem~1.1]{Symonds}). Note however that the latter only holds when both $H$ and $G$ belong to the same class of groups. 
Observe also that, in Theorem~A, we assume that $G$ has finite virtual $p$-cohomological type while no such finiteness hypotheses are needed 
in \cite[Theorem~1.1]{Symonds} for the case of profinite groups.
\end{remark}

As a special case of Theorem~A, we then get:

\begin{customcorollary}{B}Let $G$ be a $p$-good, residually $p$ group of finite virtual $p$-cohomological type 
and let $\iota\co G\hookra G_{\wC}$ be the natural monomorphism. Then, for every class of groups $\cC$ such that $\Z/p\in\cC$, we have:
\begin{enumerate}
\item $\iota$ induces a bijection of finite sets $\iota_c\co\cS_p(G)/_{\sim}\sr{\sim}{\to}\cS_p(G_{\wC})/_{\sim}$.  
\item $p$-torsion elements of $G$ are $\cC$-conjugacy distinguished (cf.\ Definition~\ref{distconjsep}).
\item For every finite $p$-subgroup $H$ of $G$, we have the identities $N_{G_\wC}(H)=\ol{N_G(H)}$ and $C_{G_\wC}(H)=\ol{C_G(H)}$.
\end{enumerate}
\end{customcorollary}

The proof of Theorem~A is based on a profinite version by Scheiderer (cf.\ Theorem~\ref{BrownRibes}) of a classical theorem of Brown
which relates the cohomology of a group of finite virtual $p$-cohomological dimension with the the cohomology of the normalizers of 
finite $p$-subgroups (cf.\ \cite[Corollary~7.4 in Chapter~X]{Brown}). 

Corollary~B applies to the profinite completion of virtually compact special groups and mapping class groups of surfaces of genus $\leq 2$
(more generally, to hyperelliptic mapping class groups, cf.\ Theorem~\ref{Hyptorsion} for a more precise result).

With more restrictive hypotheses, we extend Theorem~A to finite solvable subgroups.
More precisely, we have the following theorem (cf.\ Theorem~\ref{maintheorem} for the complete statement):

\begin{customtheorem}{C}Let $G$ be a group such that:
\begin{enumerate}
\item $G$ is $\cC$-good, residually $\cC$ and of finite virtual $p$-cohomological type for all primes $p$ such that $\Z/p\in\cC$.
\item For every solvable $\cC$-subgroup $H$ of $G$, the normalizer $N_G(H)$ of $H$ in $G$ satisfies the previous hypothesis (i)
and, moreover, its closure $\ol{N_G(H)}$ in the pro-$\cC$ completion $G_{\wC}$ of $G$ coincides with the pro-$\cC$
completion of $N_G(H)$.
\end{enumerate}
Let $\iota\co G\hookra G_{\wC}$ be the natural monomorphism. Then:
\begin{enumerate}
\item $\iota$ induces a bijection of finite sets $\iota_c\co\cS_{f\!s}(G)/_{\sim}\sr{\sim}{\to}\cS_{f\!s}(G_{\wC})/_{\sim}$. 
\item $\cC$-torsion elements are $\cC$-conjugacy distinguished (cf.\ Definition~\ref{distconjsep}).
\item For every $\cC$-subgroup $H$ of $G$, there holds $C_{G_\wC}(H)=\ol{C_G(H)}$. If, moreover, $H$ is solvable,
we also have $N_{G_\wC}(H)=\ol{N_G(H)}$.
\end{enumerate}
\end{customtheorem}

As a special case of Theorem~C, we then get:

\begin{customcorollary}{D}Let $G$ be a group such that:
\begin{enumerate}
\item $G$ is residually finite, good and of finite virtual cohomological type.
\item For every finite solvable subgroup $H$ of $G$, the normalizer $N_G(H)$ of $H$ in $G$ satisfies the previous hypothesis (i)
and, moreover, its closure $\ol{N_G(H)}$ in the profinite completion $\hG$ of $G$ coincides with the profinite completion of $N_G(H)$.
\end{enumerate}
Let $\iota\co G\hookra \hG$ be the natural monomorphism. Then:
\begin{enumerate}
\item $\iota$ induces a bijection of finite sets $\iota_c\co\cS_{f\!s}(G)/_{\sim}\sr{\sim}{\to}\cS_{f\!s}(\hG)/_{\sim}$.  
\item Torsion elements are conjugacy distinguished.
\item For every finite subgroup $H$ of $G$, there holds $C_{\hG}(H)=\ol{C_G(H)}$. If, moreover, $H$ is  solvable,
we also have $N_{\hG}(H)=\ol{N_G(H)}$.
\end{enumerate}
\end{customcorollary}

Corollary~D  applies to virtually compact special toral relatively hyperbolic groups (these include Fuchsian groups, 
fundamental groups of $3$-orbifolds and of uniform standard arithmetic hyperbolic orbifolds,  one-relator groups with torsion 
and some groups of small cancellation). 
\bigskip

\noindent
{\bf Acknowledgements.} The second author is grateful to Ashot Minasyan for helping with the references in Section 6.2.

\section{Hereditary conjugacy separability}
For the proof of Theorem~\ref{maintheorem}, we need to establish a relation between a stronger version of conjugacy
separability and centralizer and normalizer of finite subgroups in the profinite completions of a group. 

For a monomorphism $\iota\co G\hookra R$ of a group $G$ into a profinite group $R$, let us denote by $\cR$ the set of finite groups
which are obtained as images of $G$ in the finite discrete quotients of $R$. We say that a subgroup $U$ of $G$ is \emph{$\cR$-open} 
if, for some normal subgroup $N$ of $G$ contained in $U$, we have $G/N\in\cR$. We call the topology induced on $G$ by $\iota$ the 
\emph{pro-$\cR$ topology of $G$}. Note that, for $\cC$ a class of finite groups and $R=G_\wC$, we have that $\cR\subseteq\cC$ 
and the pro-$\cR$ topology of $G$ coincides with the pro-$\cC$ topology.

\begin{definition}\label{distconjsep}Let $\iota\co G\hookra R$ be as above.
\begin{enumerate}
\item An element $x$ of $G$ is $\cR$-conjugacy distinguished if its conjugacy class $x^G$ is closed for the pro-$\cR$ topology of $G$. 
The element $x$ is \emph{hereditarily $\cR$-conjugacy distinguished} if, for every $\cR$-open subgroup $U$ 
of $G$, the conjugacy class $x^U$ is closed for the pro-$\cR$ topology of $G$. 
\item A finite subgroup $H$ of $G$ is \emph{subgroup $\cR$-conjugacy distinguished} if its conjugacy class $H^G$ is closed in the space $\cS_f(G)$.
The subgroup $H$ is \emph{hereditarily subgroup $\cR$-conjugacy distinguished} if, for every $\cR$-open subgroup $U$ 
of $G$, the conjugacy class $H^U$ is closed in the space $\cS_f(G)$.
\end{enumerate}
\end{definition}

Note that, for $V$ an open subgroup of $R$, there is a natural topological embedding $\cS_f(V)\subseteq\cS_f(R)$ and then, 
for $U=\iota^{-1}(V)$, a natural topological embedding $\cS_f(U)\subseteq\cS_f(G)$.
We have the following two criteria which characterize the property of an element $x\in G$ (resp.\ a finite subgroup $H<G$) 
being hereditarily (resp.\ subgroup) conjugacy distinguished in terms of its centralizer (resp.\ normalizer) in the profinite completion $\hG$:

\begin{lemma}\label{centcriterion}For $G$ and $\cR$ as above. The following statements are equivalent:
\begin{enumerate}
\item The element $x\in G$ is hereditarily $\cR$-conjugacy distinguished.
\item There is a fundamental system of neighborhoods of the identity $\{U_\ld\}_{\ld\in\Ld}$ for the $\cR$-topology on $G$, consisting 
of open normal subgroups, such that $x$ is $\cR$-conjugacy distinguished in $\la x\ra U_\ld$, for all $\ld\in\Ld$.
\item $x$ is $\cR$-conjugacy distinguished in $G$ and $C_{R}(x)=\ol{C_G(x)}$.
\end{enumerate}
\end{lemma}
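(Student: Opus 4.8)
The plan is to establish the cycle of implications (i) $\Rightarrow$ (ii) $\Rightarrow$ (iii) $\Rightarrow$ (i). Throughout I will use that the $\cR$-topology on $G$ is the subspace topology induced by $\iota$, that $G$ is dense in $R$, and the elementary observation that, as $U_\ld\trianglelefteq G$, every element of $\la x\ra U_\ld$ has the form $x^ku$ with $k\in\Z$, $u\in U_\ld$, so that $x^{\la x\ra U_\ld}=x^{U_\ld}:=\{u^{-1}xu\mid u\in U_\ld\}$; since $\la x\ra U_\ld$ is $\cR$-open, hence closed in $G$, the set $x^{\la x\ra U_\ld}$ is closed in $\la x\ra U_\ld$ if and only if it is closed in $G$. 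With this, (i) $\Rightarrow$ (ii) is immediate: for any fundamental system $\{U_\ld\}_{\ld\in\Ld}$ of open normal subgroups for the $\cR$-topology on $G$, each $\la x\ra U_\ld$ is $\cR$-open, so by (i) the set $x^{\la x\ra U_\ld}$ is closed in $G$, which is what (ii) asserts.

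For (ii) $\Rightarrow$ (iii) I would first note that, by the remark above, (ii) says exactly that $x^{U_\ld}$ is closed in $G$ for every $\ld$. Since any $\cR$-open subgroup $W\le G$ contains some $U_\ld$, which is normal of finite index in $W$, decomposing $W$ into finitely many cosets $U_\ld w_i$ gives $x^W=\bigcup_i w_i^{-1}(x^{U_\ld})w_i$, a finite union of images of the closed set $x^{U_\ld}$ under self-homeomorphisms of $G$; hence $x^W$ is closed in $G$ for every $\cR$-open $W$, and in particular (take $W=G$) $x$ is $\cR$-conjugacy distinguished in $G$. The key point is then the identity $x^V\cap G=x^{V\cap G}$ for every open normal $V\trianglelefteq R$, where $x^V:=\{v^{-1}xv\mid v\in V\}$: since $V\cap G=\iota^{-1}(V)$ is $\cR$-open, $x^{V\cap G}$ is closed in $G$ by the previous step, while $V\cap G$ is dense in the open (hence compact) subgroup $V$, so $x^{V\cap G}$ is dense in the compact set $x^V$ and $x^V=\ol{x^{V\cap G}}$ is its closure in $R$; as the $\cR$-topology is the subspace topology, $x^{V\cap G}=\ol{x^{V\cap G}}\cap G=x^V\cap G$. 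The centralizer statement then follows: $\ol{C_G(x)}\subseteq C_R(x)$ holds because $C_R(x)$ is closed in $R$, and conversely, given $r\in C_R(x)$ and an open normal $V\trianglelefteq R$, I would pick $g\in rV\cap G$ by density and write $g=rv$ with $v\in V$; then $r^{-1}xr=x$ gives $g^{-1}xg=v^{-1}xv\in x^V\cap G=x^{V\cap G}$, say $g^{-1}xg=u^{-1}xu$ with $u\in V\cap G$, and then $gu^{-1}$ lies in $C_G(x)$ and $gu^{-1}=r(vu^{-1})\in rV$. As $V$ was arbitrary, $r\in\ol{C_G(x)}$, so $C_R(x)=\ol{C_G(x)}$.

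For (iii) $\Rightarrow$ (i), let $U\le G$ be $\cR$-open with closure $\ol U$ in $R$; this is open, since $U\supseteq\iota^{-1}(M)$ and $\ol{\iota^{-1}(M)}=M$ for some open normal $M\trianglelefteq R$. As $U$ is dense in $\ol U$, the compact set $x^{\ol U}$ is the closure of $x^U$ in $R$, so it suffices to prove $x^U=x^{\ol U}\cap G$ (the inclusion $\subseteq$ being clear). Take $g=\bar u^{-1}x\bar u\in G$ with $\bar u\in\ol U$. Then $g$ lies in the $R$-conjugacy class $x^R$ of $x$, which is the closure of $x^G$ in $R$ by density; since $x$ is $\cR$-conjugacy distinguished in $G$, the class $x^G$ is closed in $G$, so $g\in x^R\cap G=x^G$, say $g=h^{-1}xh$ with $h\in G$. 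Then $\bar uh^{-1}\in C_R(x)=\ol{C_G(x)}$, so, picking an open normal $N\trianglelefteq R$ with $N\subseteq\ol U$ and, by density of $C_G(x)$ in its closure, an element $c\in C_G(x)\cap\bar uh^{-1}N$, one gets $ch\in\ol U N=\ol U$, hence $ch\in\ol U\cap G=U$ (as $U$, being $\cR$-open, is closed in $G$); since $c$ centralizes $x$, $g=h^{-1}xh=(ch)^{-1}x(ch)\in x^U$. Thus $x^U$ is closed in $G$, as required.

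The step I expect to be the main obstacle, and the genuine content of the lemma, is the identity $x^V\cap G=x^{V\cap G}$ in (ii) $\Rightarrow$ (iii): it is the translation of hypothesis (ii) into the profinite group $R$, and it is exactly what lets one ``correct'' an arbitrary approximation $g\in G$ of an element $r\in C_R(x)$ into an element of $C_G(x)$. The correction in (iii) $\Rightarrow$ (i) is of the same flavor but uses the hypotheses $x^G=x^R\cap G$ and $C_R(x)=\ol{C_G(x)}$ directly. Everything else — the implication (i) $\Rightarrow$ (ii), the passage between ``closed in $G$'' and ``closed in $R$'', and the equalities $\ol{\iota^{-1}(M)}=M$ and $\ol U\cap G=U$ for $\cR$-open $U$ — is routine bookkeeping.
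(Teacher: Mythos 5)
Your proof is correct, but it takes a more self-contained route than the paper. For (ii)$\Rightarrow$(iii) the paper quotes Minasyan's Lemma~3.7 to get, for suitable $\cR$-open normal $L_\ld\subseteq U_\ld$, the inclusion $C_{G/L_\ld}(p_\ld(x))\subseteq p_\ld(C_G(x)U_\ld)$, and then passes to the inverse limit via $C_R(x)=\varprojlim C_{G/L_\ld}(p_\ld(x))$ and $\ol{C_G(x)}=\bigcap C_G(x)\ol{U}_\ld$; for (iii)$\Rightarrow$(i) it simply refers to the argument of Proposition~3.1 of Chagas--Zalesskii. You instead carry out both approximation arguments directly inside $R$: your key identity $x^V\cap G=x^{V\cap G}$ for open normal $V\trianglelefteq R$ (proved from closedness of $x^{V\cap G}$ in $G$ together with $x^V=\ol{x^{V\cap G}}$ by compactness and density) plays exactly the role of Minasyan's lemma, and your correction argument in (iii)$\Rightarrow$(i) is the ``element by element'' version of the cited proposition, written out. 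What your version buys is transparency and independence from the two external references, and it makes explicit the standing hypothesis that $\iota(G)$ is dense in $R$ --- a hypothesis the paper's Section~2 does not state but which its own proof (the inverse-limit identity for $C_R(x)$) also needs, and which holds in all the applications (Theorem~A assumes dense image, and $R=G_\wC$ elsewhere). What it costs is a little redundancy: your first step in (ii)$\Rightarrow$(iii), showing $x^W$ is closed for \emph{every} $\cR$-open $W$, is already statement (i), so you in fact prove (ii)$\Rightarrow$(i) en route; this is harmless, since the cycle (i)$\Rightarrow$(ii)$\Rightarrow$(iii)$\Rightarrow$(i) is still completed, but you could streamline by noting it.
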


\begin{proof}(i)$\Ra$(ii): This follows from the definitions.
\smallskip

\noindent
(ii)$\Ra$(iii): If, for some $\cR$-open subgroup $U$ of $G$, the element $x$ is $\cR$-conjugacy distinguished in $\la x\ra U$,
then the conjugacy orbit $x^U$ is closed for the pro-$\cR$ topology of $G$, but this implies that the conjugacy orbit $x^G$ is closed as well
and so $x$ is $\cR$-conjugacy distinguished in $G$. Let us then show that $C_{R}(x)=\ol{C_G(x)}$.

By hypothesis, there is a fundamental system of neighborhood of the identity $\{U_\ld\}_{\ld\in\Ld}$ for the $\cR$-topology on $G$, consisting 
of open normal subgroups, such that $x^{U_\ld}$ closed for the pro-$\cR$ topology of $G$.
By Lemma~3.7 in \cite{Minasyan} (the lemma remains true if, instead of the full profinite topology, we take on $G$ any residual profinite topology), 
where, in the lemma, we take $H=G$ and $K=U_\ld$, there is a normal subgroup $L_\ld$, open for the $\cR$-topology and contained in $U_\ld$,
such that, if we denote by $p_\ld\co G\to G/L_\ld$ the natural epimorphism, there holds $C_{G/L_\ld}(p_\ld(x))\subseteq p_\ld(C_G(x)U_\ld)$, for
all $\ld\in\Ld$.

Note that $C_R(x)=\varprojlim_{\ld\in\Ld}C_{G/L_\ld}(p_\ld(x))$ and $\ol{C_G(x)}=\bigcap_{\ld\in\Ld}C_G(x)\ol{U}_\ld$, where $\ol{U}_\ld$ is the
closure of $U_\ld$ in the profinite group $R$. Therefore, the above system of inclusion implies that $C_{R}(x)\subseteq\ol{C_G(x)}$. 
Since the reverse inclusion is obvious, the conclusion follows.
\smallskip

\noindent
(iii)$\Ra$(i): It is easy to see that the argument in the proof of Proposition~3.1 in \cite{CZ} applies element by element.
\end{proof}

\begin{lemma}\label{normcriterion}For $G$ and $\cR$ as above. The following statements are equivalent:
\begin{enumerate}
\item The finite subgroup $H$ of $G$ is hereditarily subgroup $\cR$-conjugacy distinguished.
\item There is a fundamental system of neighborhoods of the identity $\{U_\ld\}_{\ld\in\Ld}$ for the $\cR$-topology on $G$, consisting 
of open normal subgroups, such that $H$ is subgroup $\cR$-conjugacy distinguished in $H U_\ld$, for all $\ld\in\Ld$.
\item $H$ is subgroup $\cR$-conjugacy distinguished in $G$ and $N_R(H)=\ol{N_G(H)}$.
\end{enumerate}
\end{lemma}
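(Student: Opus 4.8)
The plan is to mirror the proof of Lemma~\ref{centcriterion}, replacing centralizers by normalizers and conjugacy of elements by conjugacy of finite subgroups throughout. The implication (i)$\Ra$(ii) is again immediate from the definitions, since the $U_\ld$ can be taken to be any cofinal family of $\cR$-open normal subgroups. For (ii)$\Ra$(iii), the fact that $H$ is subgroup $\cR$-conjugacy distinguished in $HU_\ld$ for some $\cR$-open $U_\ld$ forces the $G$-orbit $H^G$ to be closed in $\cS_f(G)$, exactly as in the element case: if $H^{U_\ld}$ is closed then so is its saturation under the (finite, when $G$ is virtually torsion free, but in general still controlled) action, hence $H^G=\bigcup_{g}(H^{U_\ld})^g$ is closed. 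So the substance is again the normalizer identity $N_R(H)=\ol{N_G(H)}$.

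For the normalizer identity, I would invoke the same Lemma~3.7 of \cite{Minasyan}, but applied to the \emph{normalizer} rather than the centralizer — Minasyan's lemma is stated (or readily adapted) so that, given a finite subgroup $H$ of $G$ whose $K$-orbit $H^K$ is closed for the pro-$\cR$ topology (with $K$ normal), there is a further $\cR$-open normal subgroup $L_\ld\subseteq U_\ld$ such that, writing $p_\ld\co G\tura G/L_\ld$, one has $N_{G/L_\ld}(p_\ld(H))\subseteq p_\ld(N_G(H)U_\ld)$. Here I am using that $H$ embeds into $G/L_\ld$ for $L_\ld$ small enough (possible since $H$ is finite and $G$ is residually in the relevant sense), so $p_\ld(H)\cong H$ and normalizers make sense on the nose. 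Passing to the inverse limit over the cofinal family $\{L_\ld\}$, and using $N_R(H)=\varprojlim_\ld N_{G/L_\ld}(p_\ld(H))$ together with $\ol{N_G(H)}=\bigcap_\ld N_G(H)\ol{U}_\ld$, the system of inclusions yields $N_R(H)\subseteq\ol{N_G(H)}$; the reverse inclusion is automatic because $N_G(H)$ normalizes $H$ and normalizing is a closed condition. This gives (iii).

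Finally (iii)$\Ra$(i): here I would argue that the proof of Proposition~3.1 in \cite{CZ} goes through verbatim with ``centralizer of an element'' replaced by ``normalizer of a finite subgroup'', since that argument only uses formal properties of the object $C$ (that $C_R(\cdot)=\ol{C_G(\cdot)}$, that the relevant orbit is closed, and a compactness/diagonal argument over open subgroups) — all of which hold equally for $N$. The main obstacle, and the one point requiring genuine care, is the normalizer version of Minasyan's Lemma~3.7: one must check that his argument, which is written for centralizers of elements, does apply to normalizers of finite subgroups, and in particular that the finiteness of $H$ is what makes the reduction to a single quotient $G/L_\ld$ work (this is why the hypothesis ``$H$ finite'' cannot be dropped, in contrast to the element case where a single element always generates a potentially infinite cyclic subgroup but one only needs its conjugacy class). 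Everything else is a routine transcription of the proof of Lemma~\ref{centcriterion}.

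\begin{proof}(i)$\Ra$(ii): This is immediate from the definitions.
\smallskip

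\noindent
(ii)$\Ra$(iii): If, for some $\cR$-open subgroup $U$ of $G$, the subgroup $H$ is subgroup $\cR$-conjugacy distinguished in $HU$, then the orbit $H^U$ is closed in $\cS_f(G)$, hence so is $H^G=\bigcup_{g\in G}(H^U)^g$, and thus $H$ is subgroup $\cR$-conjugacy distinguished in $G$. It remains to prove that $N_R(H)=\ol{N_G(H)}$. By hypothesis, there is a fundamental system of neighborhoods of the identity $\{U_\ld\}_{\ld\in\Ld}$ for the $\cR$-topology on $G$, consisting of open normal subgroups, such that $H^{U_\ld}$ is closed for the pro-$\cR$ topology of $G$, for all $\ld\in\Ld$. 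By the normalizer version of Lemma~3.7 in \cite{Minasyan}, applied with $H=G$ and $K=U_\ld$ in the notation there, there is a normal subgroup $L_\ld$, open for the $\cR$-topology and contained in $U_\ld$, such that, denoting by $p_\ld\co G\to G/L_\ld$ the natural epimorphism, there holds $N_{G/L_\ld}(p_\ld(H))\subseteq p_\ld(N_G(H)U_\ld)$, for all $\ld\in\Ld$. Note that $N_R(H)=\varprojlim_{\ld\in\Ld}N_{G/L_\ld}(p_\ld(H))$ and $\ol{N_G(H)}=\bigcap_{\ld\in\Ld}N_G(H)\ol{U}_\ld$, where $\ol{U}_\ld$ is the closure of $U_\ld$ in $R$. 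Therefore, the above system of inclusions implies $N_R(H)\subseteq\ol{N_G(H)}$, and since the reverse inclusion is obvious, the conclusion follows.
\smallskip

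\noindent
(iii)$\Ra$(i): The argument in the proof of Proposition~3.1 in \cite{CZ} applies, replacing the centralizer of an element by the normalizer of the finite subgroup $H$.
\end{proof}
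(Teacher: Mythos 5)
Your proposal is correct and follows exactly the route the paper intends: the paper's proof of this lemma is simply ``the same as Lemma~\ref{centcriterion} with obvious modifications,'' and you have carried out those modifications faithfully, including the normalizer analogue of Minasyan's Lemma~3.7 (which does adapt, since $\{H^g : g\notin N_G(H)U_\ld\}=H^G\ssm H^{U_\ld}$ is a finite union of closed $U_\ld$-orbits in $\cS_f(G)$ not containing $H$) and the transcription of the argument of Proposition~3.1 of \cite{CZ} for (iii)$\Ra$(i). No gaps beyond the level of detail the paper itself leaves implicit.
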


\begin{proof}The proof is essentially the same as that of Lemma~\ref{centcriterion}, with some obvious modifications.
\end{proof}

\section{A Brown theorem for profinite groups}\label{BrownSec}
In order to prove our results, we will need a version for profinite groups of Brown theorem \cite[Corollary~7.4 in Chapter~X]{Brown}. 
We restrict to profinite groups of the following type:

\begin{definition}\label{fctpro}A profinite group $R$ is of \emph{finite virtual $p$-cohomological type} if:
\begin{enumerate}
\item $R$ has finite virtual $p$-cohomological dimension (briefly, $\vcd_p(R)<\infty$);
\item for every finite discrete $\F_p[[R]]$-module $M$, the cohomology group $H^i(R,M)$ is finite for $i\geq 0$.
\end{enumerate}
\end{definition}

For a prime number $p>0$, let $\mathcal T:=\cS_p(R)_{\leq p}$ be the profinite set of subgroups of order $p$ of $R$ 
and let $\cT/_\sim$ be its quotient by the conjugacy action of $R$. This is also a profinite set and there is a natural continuous surjective map
$\mu\co\cT\to\cT/_\sim$. For $\tau\in\cT/_\sim$, the inverse image $\mu^{-1}(\tau)$ is also a profinite set.
Let then $\F_p[[\cT]]$ (resp.\ $\F_p[[\mu^{-1}(\tau)]]$) be the free profinite $\F_p$-module on the profinite space $\cT$ (resp.\ $\mu^{-1}(\tau)$). 

For a $\F_p[[R]]$-module $A$, let: 
\[C(\cT,M)=\hom(\F_p[[\cT]],M)\hspace{0.5cm} (\mbox{resp.}\hspace{0.5cm} C(\mu^{-1}(\tau),M):=\hom(\F_p[[\mu^{-1}(\tau)]],M)\;)\]
be the group of continuous maps from $\cT$ (resp.\ $\mu^{-1}(\tau)$) to $M$. 

Endowed with the compact open topology, these are both discrete topological spaces and then have a natural structure 
of (left) discrete $\F_p[[R]]$-modules. Note that there is a natural monomorphism of $\F_p[[R]]$-modules: 
\[\epsilon\co M\hookra C(\cT,M),\] 
which sends $a\in M$ to the constant map $\epsilon_a\co\cT\to M$ with value $a$. 

\begin{theorem}[Scheiderer]\label{BrownRibes}Let $R$ be a profinite group of finite virtual $p$-cohomological type and
without subgroups isomorphic to $C_p\times C_p$, where $C_p$ denotes the multiplicative group of order $p$, 
and let $M$ be a finite discrete $\F_p[[R]]$-module. 
\begin{enumerate}
\item The natural homomorphism $\epsilon\co M\hookra C(\cT,M)$ induces a natural homomorphism, for $i\geq 0$: 
\[\epsilon^i\co H^i(R,M)\to\prod_{\tau\in\cT/_\sim}H^i(R,C(\mu^{-1}(\tau),M)),\]
 which is an isomorphism for $i>\vcd_p(R)$.
\item For all $T\in\mu^{-1}(\tau)$, there is an isomorphism:
\[H^i(R,C(\mu^{-1}(\tau),M))\cong H^i(N_R(T),M).\]
\end{enumerate}
\end{theorem}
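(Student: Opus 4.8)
The statement is essentially a profinite reformulation of Brown's theorem, and the natural approach is to transcribe the classical argument (cf.\ \cite[Chapter~X]{Brown}) into the profinite setting, being careful about continuity, discreteness of coefficient modules, and the bookkeeping with the profinite conjugacy space $\cT/_\sim$. First I would fix the finite discrete $\F_p[[R]]$-module $M$ and analyze the short exact sequence of discrete $\F_p[[R]]$-modules
\[
0\lra M\stackrel{\epsilon}{\lra}C(\cT,M)\lra Q\lra 0,
\]
where $Q$ is the cokernel; the key point for part (i) is that $Q$ has vanishing cohomology in degrees $>\vcd_p(R)$. In the abstract setting this is proved by filtering $C(\cT,M)$ according to ``strata'' of $\cT$ coming from a poset of fixed-point sets and showing each stratum contributes an induced (hence cohomologically trivial in high degrees) module. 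In the profinite setting I would instead argue directly: since $R$ has no $C_p\times C_p$, any two distinct subgroups of order $p$ have trivial intersection, so the fixed-point geometry collapses, and $C(\cT,M)$ decomposes (as a sheaf on the profinite space $\cT$) into the ``diagonal'' copy $\epsilon(M)$ and a complement supported away from the diagonal; using $\vcd_p(R)<\infty$ together with the finiteness in Definition~\ref{fctpro}(ii), a Shapiro-type computation and a limit argument over the profinite space $\cT$ give $H^i(R,Q)=0$ for $i>\vcd_p(R)$. Then the long exact cohomology sequence yields that $\epsilon$ induces an isomorphism $H^i(R,M)\stackrel{\sim}{\to}H^i(R,C(\cT,M))$ for $i>\vcd_p(R)$.

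Next I would identify $H^i(R,C(\cT,M))$ with the product appearing in the statement. The map $\mu\co\cT\to\cT/_\sim$ is a continuous surjection of profinite spaces with profinite fibres, so $\F_p[[\cT]]=\bigoplus_{\tau}\F_p[[\mu^{-1}(\tau)]]$ in the appropriate (co)completed sense, and dually $C(\cT,M)\cong\prod_{\tau\in\cT/_\sim}C(\mu^{-1}(\tau),M)$ as discrete $\F_p[[R]]$-modules. Because each $M$ is finite and cohomology commutes with the relevant (co)limits over the profinite space $\cT/_\sim$ (here again one uses the finite virtual cohomological type hypothesis to control the passage to the limit), one gets
\[
H^i(R,C(\cT,M))\cong\prod_{\tau\in\cT/_\sim}H^i\bigl(R,C(\mu^{-1}(\tau),M)\bigr),
\]
which is exactly $\epsilon^i$ under the identification. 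This proves (i).

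For part (ii), fix $\tau\in\cT/_\sim$ and a representative $T\in\mu^{-1}(\tau)$ of order $p$. The fibre $\mu^{-1}(\tau)$ is the orbit $T^R$, which as a profinite $R$-space is $R/N_R(T)$ (using that $N_R(T)$ is closed and $R$ compact, so the orbit map is a homeomorphism onto its image). Hence $\F_p[[\mu^{-1}(\tau)]]\cong\F_p[[R/N_R(T)]]$ as profinite $\F_p[[R]]$-modules, and therefore $C(\mu^{-1}(\tau),M)\cong\hom(\F_p[[R/N_R(T)]],M)=\mathrm{Coind}_{N_R(T)}^R M$, the coinduced discrete module. The profinite Shapiro lemma (for discrete coefficients) then gives
\[
H^i\bigl(R,C(\mu^{-1}(\tau),M)\bigr)\cong H^i\bigl(N_R(T),M\bigr),
\]
as required.

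The main obstacle is the vanishing $H^i(R,Q)=0$ for $i>\vcd_p(R)$ in part (i): in Brown's abstract proof this rests on a genuine combinatorial stratification of the coefficient module by fixed-point posets, and the delicate part here is to run the analogous argument continuously over the profinite space $\cT$ while only knowing $\vcd_p$ of $R$ (not of an explicit finite-index subgroup), and to justify interchanging $H^i(R,-)$ with the infinite products/limits indexed by the profinite spaces $\cT$ and $\cT/_\sim$. The hypotheses in Definition~\ref{fctpro} (finiteness of $\vcd_p(R)$ and finiteness of $H^i(R,M)$ for finite $M$) are exactly what is needed to make both the vanishing and the commutation-with-limits arguments go through; the absence of $C_p\times C_p$ subgroups is what keeps the stratification trivial enough that only the normalizers of order-$p$ subgroups appear.
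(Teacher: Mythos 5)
Your part (ii) is exactly the paper's argument: the fibre $\mu^{-1}(\tau)$ is the $R$-orbit of $T$, hence isomorphic as a profinite $R$-space to the coset space $R/N_R(T)$, so $C(\mu^{-1}(\tau),M)$ is the coinduced module $\operatorname{Coind}^R_{N_R(T)}(M)$ and Shapiro's lemma gives the claim. The problem is part (i). The paper does not reprove this statement: it quotes Corollary~12.19 of \cite{Scheiderer} (a substantial result, not a routine transcription of Brown), adding only one observation, namely that Scheiderer's conclusion is an injection with \emph{dense image} into the product, and that the finiteness of $H^i(R,M)$ from Definition~\ref{fctpro} is what upgrades dense image to an isomorphism (a finite image is compact, hence closed, in the product of discrete groups). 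Your plan instead proposes to re-derive Scheiderer's theorem by adapting Brown's classical argument, but the central step --- the vanishing $H^i(R,Q)=0$ for $i>\vcd_p(R)$, equivalently the cohomological triviality in high degrees of the ``non-diagonal'' part of $C(\cT,M)$ --- is only asserted (``a Shapiro-type computation and a limit argument''), and you yourself flag it as the main obstacle without resolving it. That step is precisely the mathematical content being cited; as written your argument is a plan, not a proof.

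Moreover, one intermediate claim is false in the generality of the statement: $C(\cT,M)$ is \emph{not} isomorphic to $\prod_{\tau\in\cT/_\sim}C(\mu^{-1}(\tau),M)$ when $\cT/_\sim$ is infinite. A continuous map $\cT\to M$ (with $M$ finite discrete) factors through a finite discrete quotient of $\cT$, whereas an arbitrary family of continuous maps on the fibres need not glue to a continuous map on $\cT$; the natural map is only injective with dense image. This is exactly the subtlety that makes Scheiderer's original formulation one about dense image rather than isomorphism, and it is why the theorem carries the hypothesis that $H^i(R,M)$ is finite --- a hypothesis your sketch never invokes at this point. (Note that the finiteness of $\cT/_\sim$ is deduced only later in the paper, in the proof of Lemma~\ref{prop:prime_order}, as a \emph{consequence} of this theorem, so you cannot assume it here.) To repair your route you would either have to carry out the equivariant stratification argument over the profinite space $\cT$ in detail, or do what the paper does and cite \cite{Scheiderer}, adding the dense-image-to-isomorphism remark.
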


\begin{proof}(i): This is a more intrinsic formulation (we do not choose representatives for the orbits $\cT/_\sim$) of Corollary~12.19 in \cite{Scheiderer}.
Note that, since we are assuming that $H^i(R,M)$ is finite for $i\geq 0$, we conclude that $\epsilon^i$ is an isomorphism, for $i>\vcd_p(R)$, and not just
that it has dense image as in Scheiderer's original formulation.
\smallskip

\noindent
(ii): The isomorphism follows from Shapiro's lemma and the remark that $\mu^{-1}(\tau)$, as a profinite $G$-set, is isomorphic to the coset space 
$R/N_R(T)$, so that $C(\mu^{-1}(\tau),M)$ is isomorphic to the coinduced module $\operatorname{Coind}^R_{N_R(T)}(M)$.
\end{proof}

\section{The case of finite $p$-subgroups}
We will actually prove a stronger version of Theorem~A. For this, we need the following refinement of Definition~\ref{fct}:

\begin{definition}\label{fct2}Given a monomorphism $\iota\co G\hookra R$ of a group $G$ into a profinite group $R$
We say that the group $G$ is of \emph{finite virtual $p$-cohomological type with respect to $\iota$} if:
\begin{enumerate}
\item $G$ has finite virtual $p$-cohomological dimension (briefly, $\vcd_p(G)<\infty$);
\item for every finite discrete $p$-primary $\F_p[[R]]$-module $M$, the cohomology $H^i(G,M)$ is finite for $i\geq 0$.
\end{enumerate}
\end{definition}

Note that the profinite topology on $R$ naturally defines a profinite topology not only on every subgroup $H$ of $G$ but also on every subquotient $H/L$
of $G$ by a finite subgroup $L$. We call this topology the \emph{pro-$\cR$ topology of $H/L$}. For the class of groups consisting of such subquotients, 
we say that two groups are $\cR$-commensurable if they contain isomorphic $\cR$-open subgroups.

\begin{lemma}\label{equivfct}The second condition of Definition~\ref{fct2} is equivalent to the condition that, for a fundamental 
system $\{U_\ld\}_{\ld\in\Ld}$ of neighborhoods of the identity for the pro-$\cR$ topology of $G$, consisting of open normal 
subgroups, the cohomology $H^i(U_\ld,\F_p)$ is finite for $i\geq 0$. In particular, for the class of subquotients of $G$ by finite subgroups, 
being of finite virtual $p$-cohomological type with respect to the embedding in their pro-$\cR$ completion is a property of 
the $\cR$-commensurability class of the group.
\end{lemma}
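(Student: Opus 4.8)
The plan is to prove the two asserted equivalences separately. For the first one, I would establish the equivalence between condition (ii) of Definition~\ref{fct2} and the finiteness of $H^i(U_\ld,\F_p)$ for all $i\geq 0$, where $\{U_\ld\}$ is a fundamental system of $\cR$-open normal subgroups. One direction is trivial: taking $M=\F_p$ with trivial action and restricting to $U_\ld$ is a special case once one knows that $U_\ld$ is itself of finite virtual $p$-cohomological type with respect to the induced embedding $U_\ld\hookra\ol{U_\ld}$; and in any case $\F_p$ is a finite discrete $p$-primary $\F_p[[R]]$-module, so its restriction to any open subgroup has finite cohomology. The substantive direction is to show that finiteness of $H^i(U_\ld,\F_p)$ for all $\ld$ and all $i$ forces $H^i(G,M)$ finite for every finite discrete $p$-primary $\F_p[[R]]$-module $M$. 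Here the key point is that such an $M$, being finite and $p$-primary with continuous $R$-action, is acted on through a finite $p$-group quotient, so there is some $U_\ld$ acting trivially on $M$; then $M$ is a finite $\F_p[U_\ld]$-module with trivial action, i.e.\ a finite direct sum of copies of $\F_p$, so $H^i(U_\ld,M)$ is finite. One then runs the Lyndon--Hochschild--Serre spectral sequence $H^r(G/U_\ld, H^s(U_\ld,M))\Rightarrow H^{r+s}(G,M)$: the finite group $G/U_\ld$ acts on the finite groups $H^s(U_\ld,M)$, and cohomology of a finite group with finite coefficients is finite, so every $E_2$-term is finite, hence so is the abutment in each total degree.

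Next, for the ``in particular'' clause, I would first reduce to comparing a group $H$ with an $\cR$-open subgroup $K$ of it, and then separately handle passing to a subquotient $H/L$ by a finite normal subgroup $L$. For the open-subgroup comparison: $\vcd_p$ is visibly a commensurability invariant (restriction to a finite-index subgroup preserves finite $p$-cohomological dimension, and a torsion-free open subgroup of $H$ is also one of $K$ up to finite index, and conversely), while the finiteness condition (ii) transfers in both directions — from $H$ to $K$ by restriction (as above), and from $K$ to $H$ via the LHS spectral sequence for the finite quotient, exactly as in the first part, since the relevant modules restricted to $K$ have finite cohomology. For the subquotient step, I would use that if $L$ is a finite normal subgroup of $H$, then a discrete $p$-primary $\F_p[[\ol{H/L}]]$-module inflates to one for $\ol H$, and the LHS spectral sequence $H^r(H/L,H^s(L,M))\Rightarrow H^{r+s}(H,M)$ has $H^s(L,M)$ finite (finite group, finite coefficients), so finiteness of $H^\ast(H,-)$ and finiteness of $H^\ast(H/L,-)$ are interchangeable; together with $\vcd_p(H/L)=\vcd_p(H)$ (a torsion-free open subgroup of $H$ maps isomorphically onto one of $H/L$ since it meets $L$ trivially), this gives the claim. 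Assembling these, ``finite virtual $p$-cohomological type with respect to the embedding in the pro-$\cR$ completion'' depends only on the $\cR$-commensurability class.

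I expect the main obstacle to be purely bookkeeping rather than conceptual: one must be careful that the pro-$\cR$ topology on a subquotient $H/L$ is the right topology to make the statement meaningful (as already flagged in the paragraph preceding the lemma), and that the coefficient modules appearing at each stage — restrictions to open subgroups, inflations from finite quotients — remain within the class of finite discrete $p$-primary modules for the appropriate profinite group, so that the finiteness hypotheses actually apply. The spectral sequence arguments themselves are routine, their only input being the standard facts that $\vcd_p$ is a commensurability invariant and that group cohomology of a finite group with finite coefficients is finite; the care lies in checking that each module one feeds in is of the permitted type and that the topologies match up under the commensurability equivalences.
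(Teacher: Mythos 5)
Your treatment of the substantive direction matches the paper's proof: pick $U_\ld$ acting trivially on the finite discrete $\F_p[[R]]$-module $M$, so that $H^i(U_\ld,M)$ is a finite sum of copies of $H^i(U_\ld,\F_p)$, and then run the Lyndon--Hochschild--Serre spectral sequence $H^r(G/U_\ld,H^s(U_\ld,M))\Rightarrow H^{r+s}(G,M)$, whose $E_2$-terms are finite because the quotient is a finite group with finite coefficients; this is exactly the argument in the paper and it is fine. The gap is in the direction you call trivial. Condition (ii) of Definition~\ref{fct2} only asserts finiteness of $H^i(G,M)$; it says nothing directly about open subgroups, and your two justifications --- ``once one knows that $U_\ld$ is itself of finite virtual $p$-cohomological type'' (which is precisely what has to be proved) and ``its restriction to any open subgroup has finite cohomology'' --- are respectively circular and unsupported. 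Restriction maps $H^i(G,\F_p)\to H^i(U_\ld,\F_p)$ need not be surjective, so finiteness does not pass to open subgroups ``by restriction''. The missing one-line idea, which is what the paper uses, is Shapiro's lemma: $H^i(U_\ld,\F_p)\cong H^i(G,\operatorname{Coind}^G_{U_\ld}(\F_p))$, where $\operatorname{Coind}^G_{U_\ld}(\F_p)\cong\F_p[G/U_\ld]$ is a finite module on which $G$ acts through a finite quotient lying in $\cR$ (this is where $\cR$-openness of $U_\ld$ enters), hence a finite discrete $\F_p[[R]]$-module to which condition (ii) applies. The same repair is needed wherever you invoke ``from $H$ to $K$ by restriction'' in the commensurability discussion.

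A second, smaller point concerns the subquotient step of the ``in particular'' clause: you claim that finiteness of $H^\ast(H,-)$ and of $H^\ast(H/L,-)$ are ``interchangeable'' via the spectral sequence $H^r(H/L,H^s(L,M))\Rightarrow H^{r+s}(H,M)$. The direction from $H/L$ to $H$ is fine, but the converse is not a valid spectral-sequence deduction: finiteness of the abutment only controls the $E_\infty$-terms, which are subquotients of the $E_2$-terms. The clean route --- and the reason the paper can dispose of the ``in particular'' clause without further argument --- is that once the first equivalence is established, the defining condition refers only to $H^i(U_\ld,\F_p)$ for a fundamental system of $\cR$-open subgroups; since $\cR$-commensurable groups (in particular $H$ and $H/L$, via an $\cR$-open subgroup of $H$ meeting the finite normal subgroup $L$ trivially, as you already note when treating $\vcd_p$) share such a system up to isomorphism compatible with the pro-$\cR$ topologies, the invariance is immediate and no further spectral sequence is needed.
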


\begin{proof}Let us assume that $G$ satisfies the second condition of Definition~\ref{fct2}. Then, by Shapiro's lemma, 
$H^i(U,\F_p)=H^i(G,\operatorname{Coind}^G_U(\F_p))$ is finite for every $\cR$-open subgroup $U$ of $G$ and all $i\geq 0$. 

Let us then assume that $G$ satisfies the condition in the lemma and let $M$ be a finite discrete $\F_p[[R]]$-module.
For some $\ld\in\Ld$, the restriction of $M$ to $U_\ld$ is then the trivial module and
the condition of the lemma implies that $H^i(U_\ld,M)$ is finite for all $i\geq 0$. The Lyndon-Hochschild-Serre spectral sequence
associated to the short exact sequence $1\to U_\ld\to G\to G/U_\ld\to 1$ is the first quadrant spectral sequence:
\[E_2^{p,q}=H^p(G/U_\ld,H^q(U_\ld,M))\Ra H^{p+q}(G,M).\]
Since the terms $E_2^{p,q}=H^p(G/U_\ld,H^q(U_\ld,M))$ are finite for all $p,q\geq 0$, the abutment $E_\infty^{p,q}$ of the spectral sequence is also finite. 
The cohomology group $H^i(G,M)$ then admits a (finite) filtration by finite vector spaces and is thus finite.
\end{proof}

We then have:

\begin{theorem}\label{pcase}Let  $\iota\co G\hookra R$ be a monomorphism of a group $G$ into a profinite group $R$ with dense image. 
For a fixed prime $p>0$, let us assume that $G$ is of finite virtual $p$-cohomological type with respect to $\iota$ and that
the natural map $H^i(R,M)\to H^i(G, M)$ is an isomorphism for every discrete $\F_p[[R]]$-module $M$ and $i\gg 0$. Then, we have: 
\begin{enumerate}
\item $\iota$ induces a bijection of finite sets $\iota_c\co\cS_p(G)/_{\sim}\sr{\sim}{\to}\cS_p(R)/_{\sim}$. 
\item $p$-torsion elements and finite $p$-subgroups of $G$ are hereditarily (resp.\ subgroup) $\cR$-conjugacy distinguished.
\item For every finite $p$-subgroup $H$ of $G$, there holds: 
\[N_{R}(H)=\ol{N_G(H)}\hspace{0.5cm} \mbox{and}\hspace{0.5cm} C_{R}(H)=\ol{C_G(H)}.\]
\item For every discrete $\F_p[[N_{R}(H)]]$-module $M$ and all $i\geq 0$, 
the natural induced map $H^i(N_{R}(H),M)\to H^i(N_G(H), M)$ is an isomorphism. 
\item The quotient $N_G(H)/H$ is of finite virtual $p$-cohomological type with respect to the monomorphism $N_G(H)/H\hookra N_{R}(H)/H$
induced by $\iota$.
\end{enumerate}
\end{theorem}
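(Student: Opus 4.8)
The plan is to run an induction on $\vcd_p(G)$, using Scheiderer's profinite Brown theorem (Theorem~\ref{BrownRibes}) to reduce all five assertions simultaneously to statements about normalizers of subgroups of order $p$, which have strictly smaller $\vcd_p$ after quotienting. First one reduces to the case where $G$ has no $p$-torsion at all (i.e.\ $\vcd_p(G)=\cd_p(G)$ and there are no finite $p$-subgroups to speak of, so (i)--(v) are trivially true or vacuous), and to the case where $R$ — equivalently $G$ — has no subgroup $C_p\times C_p$; the latter is genuinely needed to invoke Theorem~\ref{BrownRibes}, and one handles the general case afterwards by passing to a suitable $\cR$-open subgroup where elementary abelian $p$-subgroups have rank $\leq 1$ on the relevant "stratum" and then gluing, or by a secondary induction on the rank. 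I expect the authors to do the latter via the fact (Lemma~\ref{equivfct}) that finite virtual $p$-cohomological type is a commensurability invariant, so that the hypotheses of the theorem are inherited by $\cR$-open subgroups.

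The core step is the following. Assume $G$ (hence $R$) has no $C_p\times C_p$. Apply Theorem~\ref{BrownRibes} to $R$ and to $G$ — for $G$ one uses the \emph{abstract} Brown theorem \cite[Corollary~7.4, Ch.~X]{Brown} — with the \emph{same} finite discrete $\F_p[[R]]$-module $M$. The point is that $\cT_R=\cS_p(R)_{\leq p}$ and $\cT_G=\cS_p(G)_{\leq p}$, together with their quotients by conjugation, are compatible: an element $x\in G$ of order $p$ generates a subgroup $\la x\ra$, and what must be shown along the way is precisely that $\cS_p(G)/_\sim\to\cS_p(R)/_\sim$ is a bijection on the order-$p$ part, i.e.\ that $\cT_G/_\sim\to\cT_R/_\sim$ is a bijection, and that $N_R(\la x\ra)=\ol{N_G(\la x\ra)}$. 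Comparing the two Brown decompositions gives, for $i\gg 0$,
\[
\prod_{\tau\in\cT_R/_\sim}H^i(N_R(T_\tau),M)\;\cong\;H^i(R,M)\;\cong\;H^i(G,M)\;\cong\;\prod_{\sigma\in\cT_G/_\sim}H^i(N_G(S_\sigma),M),
\]
and one wants to read off from this both the bijection $\cT_G/_\sim\to\cT_R/_\sim$ and the isomorphisms $H^i(N_R(T),M)\cong H^i(N_G(S),M)$ for matching classes. Here $N_G(\la x\ra)$ has $\vcd_p$ at most $\vcd_p(G)$, but the point is that $\la x\ra$ is a nontrivial normal finite $p$-subgroup of $N_G(\la x\ra)$, so $N_G(\la x\ra)/\la x\ra$ has strictly smaller $\vcd_p$ — one now applies the inductive hypothesis to $N_G(\la x\ra)/\la x\ra\hookra N_R(\la x\ra)/\la x\ra$ (whose hypotheses hold once one knows $N_R(\la x\ra)=\ol{N_G(\la x\ra)}$ and the cohomology comparison, which is part of what is being proved — so the bookkeeping has to be arranged carefully, proving the normalizer identity for order-$p$ subgroups first, directly, before invoking induction for the quotient).

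For the normalizer identity at the base: for $\la x\ra$ of order $p$, use that $N_R(\la x\ra)$ is the stabilizer of the point $[\la x\ra]\in\cT_R$ under the $R$-action, and $\cT_R=\varprojlim \cT_{R/U}$ over open normal $U\lhd R$; combine this with the separability statement (ii) for the element $x$, proved via the criterion Lemma~\ref{centcriterion}/Lemma~\ref{normcriterion} (whose hypothesis (ii) one verifies by the same Brown comparison run over the system $\{U_\ld\}$ of $\cR$-open normal subgroups, exploiting that $\cd_p$ of $U_\ld$ is finite and that $x$ has order $p$). Once the order-$p$ case of (i)--(v) is in hand, a general finite $p$-subgroup $H$ is handled by induction on $|H|$: pick a central subgroup $\la x\ra\leq H$ of order $p$ (which exists since $p$-groups have nontrivial center), pass to $N_G(\la x\ra)/\la x\ra\hookra N_R(\la x\ra)/\la x\ra$ where $H/\la x\ra$ is a smaller finite $p$-subgroup, apply induction there, and lift the conclusions back using the already-established $\vcd_p$-drop, the five-term exact sequences, and the fact that $N_R(H)\subseteq N_R(\la x\ra)$ and $C_R(H)\subseteq C_R(\la x\ra)$.

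The main obstacle I anticipate is the circularity in the bookkeeping: the bijection (i), the conjugacy-distinguishedness (ii), the normalizer/centralizer identities (iii), and the cohomology comparison (iv) for $N_G(\la x\ra)$ are mutually entangled, and (v) is what allows the induction to proceed. The resolution is a carefully ordered induction on $\vcd_p(G)$ where, at each stage, one first establishes (ii) and the \emph{order-$p$} case of (iii) by a direct separability argument (not using the inductive hypothesis), then derives the order-$p$ case of (i) and (iv) from the two Brown decompositions, then obtains (v) for order-$p$ subgroups from Lemma~\ref{equivfct}, and only then feeds $N_G(\la x\ra)/\la x\ra$ into the inductive hypothesis to bootstrap to arbitrary finite $p$-subgroups; handling the $C_p\times C_p$-free reduction cleanly (so that Theorem~\ref{BrownRibes} applies) is the other delicate point, managed by passing to $\cR$-open subgroups and invoking commensurability-invariance of the hypotheses.
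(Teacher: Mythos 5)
Your overall shape (compare Scheiderer's profinite Brown theorem for $R$ with the abstract Brown theorem for $G$ at the order-$p$ level, then bootstrap to general finite $p$-subgroups through a central subgroup of order $p$ and the quotient pair $N_G(T)/T\hookra N_R(T)/T$, with items (iv)--(v) making the bootstrap legitimate) is indeed the paper's strategy. But your top-level inductive parameter is wrong. You claim that since $\la x\ra$ is a nontrivial finite normal $p$-subgroup of $N_G(\la x\ra)$, the quotient $N_G(\la x\ra)/\la x\ra$ has \emph{strictly smaller} $\vcd_p$; this is false. Virtual cohomological dimension is a commensurability invariant and is unchanged by quotients by finite normal subgroups: a torsion-free finite-index subgroup of $N_G(\la x\ra)$ meets $\la x\ra$ trivially and maps isomorphically onto a finite-index subgroup of the quotient. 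Already $G=C_p\times\Z$, $T=C_p$ gives $\vcd_p(N_G(T)/T)=\vcd_p(\Z)=1=\vcd_p(G)$. So an induction on $\vcd_p(G)$ never descends and cannot get started as announced. What actually decreases is the \emph{order} of the finite $p$-subgroup ($H/T$ has order $|H|/p$), and this is the induction the paper runs; the price is that one must verify that the quotient pair $N_{G_\ld}(T)/T\hookra N_{R_\ld}(T)/T$ again satisfies all the hypotheses of the theorem (dense image, cohomology isomorphism, finite virtual $p$-cohomological type), which is exactly what items (iii)--(v) at the order-$p$ stage plus a Shapiro/Lyndon--Hochschild--Serre argument (the paper's Lemma~\ref{inductionok}) deliver. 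You do mention induction on $|H|$ in passing, so the plan is repairable, but as written the announced induction rests on a false decrease.

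The second gap is your treatment of the $C_p\times C_p$ restriction in Theorem~\ref{BrownRibes}, which you leave to an unspecified ``gluing'' or ``secondary induction on the rank.'' The paper's device is concrete and does more than remove the obstruction: choose $V$ open normal in $R$ with $\cd_p(V)<\infty$ (hence $p$-torsion free) and apply everything to $K=VT\cong V\rtimes C_p$ and $L=\iota^{-1}(K)$, where \emph{every} finite $p$-subgroup automatically has order $p$, so Scheiderer applies. Moreover, in that setting $N_K(T)=T\times(V\cap N_K(T))$, and the K\"unneth formula shows each factor $H^i(N_K(T),\F_p)$ contains $\F_p$ as a direct summand in every degree. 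This nonvanishing is what lets one read off, from the isomorphism between the two product decompositions of $H^i$ in degrees $i>\vcd_p$, that the map $\cI/_\sim\to\cT/_\sim$ on conjugacy classes is a bijection (a zero factor could otherwise be silently dropped or collapsed), and it is also what gives the degreewise comparison $H^i(V\cap N_K(T),\F_p)\cong H^i(U\cap N_L(T),\F_p)$ for \emph{all} $i\geq 0$, not just $i\gg 0$, which feeds items (iv) and (v). Your sketch asserts the bijection and the matching of factors without supplying this nonvanishing, so even the base case is incomplete as stated.
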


\subsection{A preliminary lemma}
The proof of Theorem~\ref{pcase} is by induction on the order of finite subgroups of $G$ and $R$. The base for the induction
is provided by the case of subgroups of order $p$. We will need the following lemma:

\begin{lemma}\label{prop:prime_order}With the same hypotheses of Theorem~\ref{pcase}, let us assume moreover that $R=V\rtimes C_p$ is a 
semidirect product, where $C_p$ is a group of prime order $p$ and $V$ is a profinite group such that $\cd_p(V)<\infty$. Then:
\begin{enumerate} 
\item $\iota$ induces a bijection of finite sets $\iota_c\co\cS_p(G)/_{\sim}\sr{\sim}{\to}\cS_p(R)/_{\sim}$. 
\item Let $U:=\iota^{-1}(V)$. Then, for every subgroup $T$ of $G$ of order $p$, there holds:
\[N_G(T)=T\times(U\cap N_G(T))=C_G(T)\hspace{0.5cm}\mbox{and}\hspace{0.5cm} N_R(T)=T\times(V\cap N_R(T))=C_R(T).\]
\item For every subgroup $T$ of $G$ of order $p$, the induced map on cohomology groups $H^i (V\cap N_{R}(\iota(T)),\F_p)\to H^i(U\cap N_G(T),\F_p)$ 
is an isomorphism for all $i\geq 0$, where $\F_p$ is the trivial module. 
\item For every subgroup $T$ of $G$ of order $p$, there holds $\vcd_p(N_{G}(T))<\infty$ and the cohomology $H^i(U\cap N_G(T),\F_p)$
is a finite vector space for all $i\geq 0$.
\end{enumerate}
\end{lemma}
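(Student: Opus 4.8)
The plan is to exploit the extremely rigid structure of $R=V\rtimes C_p$ together with the profinite Brown theorem, Theorem~\ref{BrownRibes}.

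\emph{Reductions.} Since $\cd_p(V)<\infty$, the profinite group $V$ is torsion free at $p$; as $[R:V]=p$, every finite $p$-subgroup of $R$ embeds into $R/V\cong C_p$, hence has order $\le p$, and a subgroup of order $p$ is exactly a complement of $V$ in $R$ (it meets $V$ trivially and maps isomorphically onto $R/V$). Applying this to the images $\iota(P)$ of finite $p$-subgroups $P\le G$, the same holds in $G$: finite $p$-subgroups have order $\le p$, those of order $p$ are the complements of $U:=\iota^{-1}(V)$, and $[G:U]=p$, $U$ is dense in $V$, $\vcd_p(U)=\vcd_p(G)<\infty$. Moreover $\vcd_p(R)\le\cd_p(V)<\infty$ and neither $R$ nor $G$ contains a subgroup $C_p\times C_p$, so Theorem~\ref{BrownRibes}(ii) (which is just Shapiro's lemma) is available for $R$, and Brown's theorem is available for the abstract group $G$, so that $\cS_p(G)/_{\sim}$ is finite.

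\emph{Part (ii).} Let $T\le R$ have order $p$, so $T$ is a complement of $V$ and $V\cap T=1$. For $n\in N_V(T)$ and $t\in T$ the commutator $[n,t]=ntn^{-1}t^{-1}$ lies in $T$ (as $n$ normalises $T$) and in $V$ (as $V\trianglelefteq R$ and $n\in V$), hence in $V\cap T=1$; thus $N_V(T)=C_V(T)$. Writing a general element of $R$ as $vt$ with $v\in V$, $t\in T$, one has $(vt)T(vt)^{-1}=vTv^{-1}$, so $N_R(T)=N_V(T)\cdot T=C_V(T)\cdot T$, which is the direct product $C_V(T)\times T=C_R(T)$ since $C_V(T)$ centralises $T$; in particular $V\cap N_R(T)=C_V(T)$. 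Pulling back along the monomorphism $\iota$ — for which $N_G(T)=\iota^{-1}(N_R(\iota(T)))$, $C_G(T)=\iota^{-1}(C_R(\iota(T)))$ and $U\cap N_G(T)=\iota^{-1}(V\cap N_R(\iota(T)))$ — yields $N_G(T)=T\times(U\cap N_G(T))=C_G(T)$ together with the displayed profinite identity.

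\emph{Parts (i), (iii) and (iv).} Fix $\tau\in\cT/_{\sim}$, a representative $S\in\mu^{-1}(\tau)$, and put $M_\tau:=C(\mu^{-1}(\tau),\F_p)$, a discrete $\F_p[[R]]$-module. By Theorem~\ref{BrownRibes}(ii), $H^i(R,M_\tau)\cong H^i(N_R(S),\F_p)$ for all $i\ge0$; by part~(ii) and the Künneth formula over $\F_p$ this equals $\bigoplus_{a+b=i}H^a(C_p,\F_p)\otimes_{\F_p}H^b(C_V(S),\F_p)$, and since $H^a(C_p,\F_p)=\F_p$ for all $a$ while $\cd_p(C_V(S))\le\cd_p(V)=:d<\infty$, it stabilises, for $i\ge d$, to $\bigoplus_{b=0}^{d}H^b(C_V(S),\F_p)$, which has $H^0(C_V(S),\F_p)=\F_p$ as a direct summand. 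Applying the hypothesis on $\iota$ to $M_\tau$ then gives $H^i(G,M_\tau)\cong H^i(N_R(S),\F_p)$ for $i\gg0$. The essential step is to identify $H^\ast(G,M_\tau)$ from the $G$-side: one decomposes the profinite $G$-space $\mu^{-1}(\tau)$ into $G$-orbits, notes that the orbit of a point $\iota(T')$ coming from a subgroup $T'\le G$ of order $p$ is dense in $\mu^{-1}(\tau)$ (as $\iota(G)$ is dense in $R$) and $G$-equivariantly $\cong G/N_G(T')$, and — this is where the finiteness $\cd_p(C_V(S))<\infty$ is used — that the remaining orbits do not contribute in large degrees; hence, for $i\gg0$, the natural map $H^i(R,M_\tau)\to H^i(G,M_\tau)$ is identified with the restriction $H^i(N_R(S),\F_p)\to\bigoplus_{[T']:\,\iota_c([T'])=\tau}H^i(N_G(T'),\F_p)$ induced by the inclusions $N_G(T')\hookrightarrow N_R(\iota(T'))\cong N_R(S)$. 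Now everything falls out: if $\tau\notin\Im\iota_c$ the target vanishes for $i\gg0$, contradicting the nonzero summand $\F_p$ above, so $\iota_c$ is surjective (whence $\cS_p(R)/_{\sim}$ is finite); since the displayed restriction is forced to be an isomorphism, a degree-by-degree comparison of the Künneth pieces of $N_R(S)=C_p\times C_V(S)$ against those of $N_G(T')=C_p\times C_U(T')$ forces there to be exactly one class $[T']$ over each $\tau$ (this gives the injectivity in (i)), forces $\cd_p(C_U(T'))\le d$, and yields the isomorphism $H^i(V\cap N_R(\iota(T')),\F_p)\cong H^i(U\cap N_G(T'),\F_p)$ in every degree (this is (iii)). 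Finally, (iv): $\vcd_p(N_G(T'))\le\vcd_p(U)<\infty$ is immediate from part~(ii), and the finiteness of $H^i(U\cap N_G(T'),\F_p)$ follows from the isomorphism of (iii) together with the finiteness encoded in $G$ being of finite virtual $p$-cohomological type with respect to $\iota$ (via Lemma~\ref{equivfct}).

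\emph{Main obstacle.} The hard part is the intrinsic computation of $H^\ast(G,C(\mu^{-1}(\tau),\F_p))$. The orbit through a point $\iota(T')$ is only \emph{dense}, not closed, in the profinite space $\mu^{-1}(\tau)$ — which genuinely has infinitely many $G$-orbits in general — so Shapiro's lemma cannot be applied verbatim. One has to combine the density of $\iota(G)$ in $R$ with the bound $\cd_p(C_V(S))\le d$ — which makes the transition maps in the colimit presenting $C(\mu^{-1}(\tau),\F_p)$ kill the higher cohomology contributed by the ``non-rational'' part of $\mu^{-1}(\tau)$ — in order to see that, in high degrees, only the subgroups of order $p$ of $G$ survive; the same delicacy is what makes the passage from ``isomorphism in high degrees'' to the all-degree statement in (iii) (and the injectivity of $\iota_c$) require care rather than a formal argument.
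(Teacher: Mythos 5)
Your reductions and your proof of item (ii) are correct and coincide with the paper's (the commutator argument giving $N_R(T)=T\times(V\cap N_R(T))=C_R(T)$, pulled back along $\iota$), and the overall architecture you set up afterwards --- Scheiderer's Theorem~\ref{BrownRibes} plus the K\"unneth decomposition of $H^n(N_R(T),\F_p)\cong\bigoplus_{i\le n}H^i(V\cap N_R(T),\F_p)$, the nonvanishing $\F_p$-summand in degree $0$, and a summand-by-summand comparison to extract (i), (iii), (iv) --- is exactly the right skeleton. But the step you yourself flag as the ``main obstacle'' is a genuine gap, and it is the heart of the lemma: you never actually compute $H^\ast(G,C(\mu^{-1}(\tau),\F_p))$. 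The proposed route --- decompose the profinite $G$-space $\mu^{-1}(\tau)$ into $G$-orbits, note the orbit of $\iota(T')$ is dense and isomorphic to $G/N_G(T')$, and argue that ``the remaining orbits do not contribute in large degrees'' --- does not work as stated: since the $G$-orbits are dense rather than clopen, $C(\mu^{-1}(\tau),\F_p)$ does not split as a $G$-module along them, Shapiro's lemma is unavailable (as you note), and the vanishing claim for the ``non-rational'' part has no justification. Indeed, writing $C(\mu^{-1}(\tau),\F_p)$ as a colimit over finite quotients $Q_j$ of $\mu^{-1}(\tau)$, the point stabilizers in $G$ are merely $\cR$-open subgroups, which in general still contain $p$-torsion and therefore have nonvanishing cohomology in arbitrarily high degrees; controlling the colimit would essentially amount to reproving a Brown-type theorem, which is the content you are missing.

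The paper sidesteps this entirely: on the $G$-side it does not restrict the profinite decomposition at all, but applies the \emph{classical} Brown theorem to the discrete group $G$ (legitimate since $\vcd_p(G)<\infty$), obtaining $H^i(G,M)\cong\prod_{\alpha\in\cI/_\sim}H^i(G,C(m^{-1}(\alpha),M))$ for $i>\vcd_p(G)$, where $\cI$ is the set of order-$p$ subgroups of $G$ and each $m^{-1}(\alpha)$ is a genuine transitive discrete $G$-set, so Shapiro gives $H^i(N_G(T),M)$ directly. It then compares the two product decompositions through the commutative square induced by $\iota$ and the index map $\iota_c\co\cI/_\sim\to\cT/_\sim$: since the left vertical map and both horizontal maps are isomorphisms (for $i\gg0$), the right vertical map is one, and the nonvanishing degree-zero K\"unneth summands force $\iota_c$ to be a bijection; the same diagram, read summand by summand in the K\"unneth decompositions, gives your items (iii) and (iv) in all degrees, exactly as in the final part of your argument. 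So your conclusion-drawing from the comparison is sound, but the comparison itself must be established via Brown's theorem for the abstract group $G$, not by an orbit analysis of $\mu^{-1}(\tau)$; as written, your proof of items (i), (iii) and (iv) is incomplete.
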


\begin{proof}Since $\cd_p(V)<\infty$, the group $V$ is $p$-torsion free. Hence all finite $p$-subgroups of $R=V\rtimes C_p$
are isomorphic to $C_p$ and our hypotheses imply that $R$ is of finite virtual $p$-cohomological type. By Theorem~\ref{BrownRibes}, 
for every finite discrete $\F_p[[R]]$-module $M$ and $i>\vcd_p(R)$, there is then a natural isomorphism (with the notations of Section~\ref{BrownSec}):
\begin{equation}\label{eq:Ribes}
H^i(R, M)\sr{\sim}{\to}\prod_{\tau\in\cT/_\sim}H^i(R,C(\mu^{-1}(\tau),M)).
\end{equation}
Moreover, for all $\tau\in\cT/_\sim$ and $T\in\mu^{-1}(\tau)$, there is an isomorphism: 
\begin{equation}\label{eq:Ribes2}
H^i(R,C(\mu^{-1}(\tau),M))\cong H^i(N_R(T),M).
\end{equation}

Since, for all $T\in\cT$, the subgroup $T$ of $R$ has trivial intersection with the torsion free normal subgroup $V$, 
there is a direct product decomposition: 
\[N_R(T)=T\times(V\cap N_R(T)),\]
which proves item (ii) of the lemma. 

By the K\"unneth formula for group cohomology and the fact that $H^k(T,\F_p)=\F_p$ for all $k\geq 0$,
there is then a series of natural isomorphisms, for all $n\geq 0$:
\begin{equation}\label{Kunneth1}
H^n(N_R(T),\F_p)\cong\bigoplus_{i+k=n} H^i(V\cap N_R(T),\F_p)\otimes H^k(T,\F_p)\cong\bigoplus_{i\leq n} H^i(V\cap N_R(T),\F_p).
\end{equation}
In particular, the cohomology group $H^n(N_R(T),\F_p)$ contains $H^0(V\cap N_R(T),\F_p)\cong\F_p$ as a direct summand and is nontrivial.
Since, by hypothesis, $H^i(R, \F_p)$ is finite for $i\geq 0$, the isomorphism~\eqref{eq:Ribes} then implies that the set $\cT/_\sim$ is finite.

Note that, since the group $G$ identifies with a subgroup of $R$, all elementary abelian $p$-subgroups of $G$ have also rank at most $1$.
Let $\cI$ be the set of subgroups of order $p$ in $G$, let $\cI/_\sim$ be its quotient by the conjugacy action of $G$ and $m\co\cI\to\cI/_\sim$ the natural map.  
For $\alpha\in \cI/_\sim$ and a given $p$-primary torsion $G$-module $M$, let also $C(m^{-1}(\alpha),M):=\hom(\F_p[m^{-1}(\alpha)],M)$ with its natural
structure of left $G$-module.

By hypothesis, $\vcd_p(G)<\infty$. So we can apply a classical result of Brown (cf.\ Corollary~7.4 and Remark below in Chapter~X of \cite{Brown}) 
and conclude that, for every $p$-primary torsion $G$-module $M$ and all $i>\vcd_p(G)$, there is a natural isomorphism:
\begin{equation}\label{eq:Brown} 
H^i(G,M)\sr{\sim}{\to} \prod_{\alpha\in \cI/_\sim} H^i(G,C(m^{-1}(\alpha),M)).
\end{equation}
Moreover, for all $\alpha\in \cI/_\sim$ and $T\in m^{-1}(\alpha)$, there is an isomorphism: 
\begin{equation}\label{eq:Brown2}
H^i(G,C(m^{-1}(\alpha),M))\cong H^i(N_G(T),M).
\end{equation}

 For all $T\in \cI$, there is a direct product decomposition: 
\[N_G(T)=T\times(U\cap N_G(T)).\]
As above, by the K\"unneth formula, we deduce that there is a series of natural isomorphisms, for all $n\geq 0$:
\begin{equation}\label{Kunneth2}
H^n(N_G(T),\F_p)\cong\bigoplus_{i+k=n} H^i(U\cap N_G(T),\F_p)\otimes H^k(T,\F_p)\cong\bigoplus_{i\leq n} H^i(U\cap N_G(T),\F_p).
\end{equation}
In particular, the cohomology group $H^n(N_G(T),\F_p)$ contains $H^0(U\cap N_G(T),\F_p)\cong\F_p$ as a direct summand and is nontrivial.
Since, by hypothesis, $H^i(G, \F_p)$ is finite for $i\geq 0$, this implies that the set $\cI/_\sim$ is finite. It also follows that $H^i(U\cap N_G(T),\F_p)$
is finite for all $i\geq 0$, which is the claim of the second part of item (iv) of the lemma while
the first part of the same item simply follows from Shapiro's lemma. 

\smallskip
Putting together the isomorphisms~\eqref{eq:Ribes} and~\eqref{eq:Brown}, for every finite discrete $\F_p[[R]]$-module $M$ and all 
$i>\vcd_p(G)$, we get a natural commutative diagram:
\begin{equation}\label{commdiagr}
\xymatrix{H^i(G,M)\ar[r]& \prod_{\alpha \in \cI/_\sim} H^i(G,C(m^{-1}(\alpha),M)) \\	
H^i(R,M)\ar[u]^{\iota^\ast}	\ar[r]&\prod_{\tau\in\cT/_\sim} H^i(R,C(\mu^{-1}(\tau),M))\ar[u]^{\iota_c^\ast}	}, 
\end{equation}
induced by the embedding $\iota\co G\hookra R$ and the natural map $\iota_c\co \cI/_\sim\to\cT/_\sim$.
The left vertical map and the horizontal maps are isomorphisms. Hence, the right vertical map is also an isomorphism. 

For $M=\F_p$ the trivial module, $\alpha\in\cI/_\sim$ and $T\in m^{-1}(\alpha)$, the isomorphism $\iota_c^\ast$ in the diagram maps
$H^i(R,C(\mu^{-1}(\iota_c(\alpha)),\F_p))$ to $H^i(G,C(m^{-1}(\alpha),\F_p))$. This implies that $\iota_c^\ast$ can be an isomorphism 
only if the natural map $\iota_c\co \cI/_\sim\to\cT/_\sim$ is in fact a bijection. This proves the first item of the lemma. 

The horizontal isomorphisms in diagram~\eqref{commdiagr} implies that, for every subgroup $T$ of $G$, 
the natural map $H^i(N_R(\iota(T)), \F_p) \to H^i(N_G(T), \F_p)$ is an isomorphism of finite vector spaces for all $i>\vcd_p(G)$. 
By the K\"unneth decompositions~\eqref{Kunneth1} and~\eqref{Kunneth2}, we then have that the natural map
$H^i (V\cap N_{R}(\iota(T)),\F_p)\to H^i(U\cap N_G(T),\F_p)$ is an isomorphism of finite vector spaces for all $i\geq 0$ 
(as the homomorphism sends each direct summand into the corresponding direct summand). This proves item (iii) of the lemma.
\end{proof}

\subsection{Theorem~\ref{pcase} holds for subgroups of order $p$ of $G$ and $R$}\label{orderp}
(i): The hypotheses of Theorem~\ref{pcase} imply that $\vcd_p(R)<\infty$. Hence, $R$ contains an open normal subgroup $V$ such that 
$\cd_p(V)<\infty$ which implies that $V$ is $p$-torsion free. The inverse image $U=\iota^{-1}(V)$ is then an $\cR$-open normal
$p$-torsion free subgroup of $G$. 

Let $T$ be a subgroup of order $p$ of $R$, put $K:=V T$ and let $L=\iota^{-1}(K)$.
The group $L$ together with the restriction $\iota|_L\co L\hookra K$ clearly satisfy all the hypotheses of Theorem~\ref{pcase}.
Therefore, by the first item of Lemma~\ref{prop:prime_order}, every finite subgroup of $K$ of order $p$ is conjugate to a subgroup of $L$.
In particular, $T$ is conjugate to a subgroup of $G$. We have then proved that the natural map $\iota_c\co\cS_p(G)/_\sim\to\cS_p(R)/_\sim$ is surjective.

Let us show that the subgroups of order $p$ of $G$ are hereditarily subgroup $\cR$-conjugacy distinguished. 
Besides the second half of item (ii) of the theorem, this implies that $\iota_c$ is injective. 

By Lemma~\ref{normcriterion}, in order to prove that a subgroup $T$ of order $p$ of $G$ is hereditarily subgroup $\cR$-conjugacy distinguished,
it is enough to show that, for a fundamental system $\{V_\ld\}_{\ld\in\Ld}$ of neighborhoods of the identity of $R$ consisting of open normal 
subgroups contained in $V$ and $U_\ld:=\iota^{-1}(V_\ld)$, for $\ld\in\Ld$, the embedding $U_\ld C\hookra V_\ld C$ induces a 
bijection between conjugacy classes of subgroups of order $p$. This follows from Lemma~\ref{prop:prime_order}. In fact, the hypotheses
of Theorem~\ref{pcase} imply that every embedding $U_\ld C\hookra V_\ld C$ satisfies the hypotheses of Lemma~\ref{prop:prime_order},
for $\ld\in\Ld$.

The fact that the set of conjugacy classes of finite subgroups of order $p$ of $G$ is finite follows from the fact that, by (i) of Lemma~\ref{prop:prime_order}, 
there are only finitely many such conjugacy classes in $UT$ for every subgroup $T$ of $G$ of order $p$ and the fact that, if two such 
$T_1$ and $T_2$ have the same image in the finite quotient $G/U$, then $UT_1=UT_2$.
\smallskip

\noindent
(ii): The second half of item (ii) has been proved above. Let us show that elements of order $p$ in $G$ are hereditarily $\cR$-conjugacy distinguished.
With the above notations, item (i) and (ii) of Lemma~\ref{prop:prime_order} imply that an element $x\in G$ of order $p$ is conjugacy
distinguished in $U_\ld\la x\ra$, for all $\ld\in\Ld$. The conclusion then follows from Lemma~\ref{centcriterion}.
\smallskip

\noindent
(iii): This follows from the previous two items, Lemma~\ref{normcriterion} and Lemma~\ref{centcriterion}.
\smallskip

\noindent
(iv): By item (iii) of Lemma~\ref{prop:prime_order}, we have that, for every subgroup $T$ of $G$ of order $p$, the natural map
$H^i (V_\ld\cap N_{R}(\iota(T)),\F_p)\to H^i(U_\ld\cap N_G(T),\F_p)$ is an isomorphism for all $i\geq 0$ and all $\ld\in\Ld$,
where $\F_p$ is the trivial module. Since $\{V_\ld\}_{\ld\in\Ld}$ is a fundamental system of neighborhoods of the identity of $R$,
this implies  item (iv) of the theorem. 
\smallskip

\noindent
(v): This immediately follows from item (iv) of Lemma~\ref{prop:prime_order} and Lemma~\ref{equivfct}.

\subsection{Proof of Theorem~\ref{pcase}}\label{pcaseproof}
The proof is by induction on the cardinality of finite $p$-subgroups. The base for the induction is provided by Section~\ref{orderp}.
Let us then assume that Theorem~\ref{pcase} holds for finite $p$-subgroups and elements of order less than a fixed integer $k>0$ 
and let us prove it for all $p$-subgroups and elements of order $k$. 
\smallskip

\noindent
(i) and (ii):
Let $\iota\co G\hookra R$ be a monomorphism as in the hypotheses of the theorem and let $H$ be a subgroup of $R$ of order $k$. 
Since $\vcd_p(R)<\infty$, there is a fundamental system $\{V_\ld\}_{\ld\in\Ld}$ of neighborhoods of the identity of $R$ consisting of 
$p$-torsion free open normal subgroups. Let $R_\ld:=V_\ld H<R$, for $\ld\in\Ld$, and let us observe that these groups also satisfy 
the hypotheses of Theorem~\ref{pcase}. Note then that $H$ contains a proper nontrivial central subgroup $T$ of order $p$ so that 
$H<C_{R_\ld}(T)\leq N_{R_\ld}(T)$.

Let $G_\ld:=\iota^{-1}(R_\ld)$, for $\ld\in\Ld$. By the case treated in Section~\ref{orderp}, possibly after conjugating by an element of $R_\ld$, 
we can assume that $T$ is contained in $G_\ld$, that $N_{R_\ld}(T)=\ol{N_{G_\ld}(T)}$, that the natural map 
$H^i(N_{R_\ld}(T),M)\to H^i(N_{G_\ld}(T),M)$ is an isomorphism for every discrete $\F_p[[N_{R_\ld}(T)]]$-module $M$ and all $i\geq 0$ 
and that $N_{G_\ld}(T)/T$ is of finite virtual $p$-cohomological type with respect to the natural monomorphism $N_{G_\ld}(T)/T\hookra N_{R_\ld}(T)/T$.

\begin{lemma}\label{inductionok}The induced map on cohomology $H^i(N_{R_\ld}(T)/T,M)\to H^i(N_{G_\ld}(T)/T,M)$ 
is an isomorphism for every discrete $\F_p[[N_{R_\ld}(T)/T]]$-module $M$ and all $i\geq 0$.
\end{lemma}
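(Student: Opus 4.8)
The plan is to obtain the statement by comparing the Lyndon--Hochschild--Serre spectral sequences of the two central extensions gotten by dividing $N_{R_\ld}(T)$ and $N_{G_\ld}(T)$ by $T$, feeding in the cohomology isomorphism $H^i(N_{R_\ld}(T),M)\to H^i(N_{G_\ld}(T),M)$ recorded just before the statement. The first point I would check is that $T$ is central in $N_{R_\ld}(T)$ (hence in $N_{G_\ld}(T)=N_{R_\ld}(T)\cap G_\ld$). Indeed, since $V_\ld$ is $p$-torsion free, $T$ is a complement to $V_\ld$ in $V_\ld\rtimes T$, so an element of $V_\ld$ that normalizes $T$ must fix a generator of $T$ and hence centralize it; as moreover $N_{R_\ld}(T)=N_{V_\ld}(T)\,H$ and $H\le C_{R_\ld}(T)$ because $T$ is central in $H$, we get $N_{R_\ld}(T)=C_{R_\ld}(T)$ (alternatively, this is Lemma~\ref{prop:prime_order}(ii) applied to $V_\ld\rtimes T$). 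Thus we are comparing the two central extensions
\[1\lra T\lra N_{R_\ld}(T)\lra N_{R_\ld}(T)/T\lra 1,\qquad 1\lra T\lra N_{G_\ld}(T)\lra N_{G_\ld}(T)/T\lra 1,\]
with $T\cong C_p$, via the inclusion $N_{G_\ld}(T)\hookra N_{R_\ld}(T)$, which is the identity on $T$.

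Next I would set up, for a discrete $\F_p[[N_{R_\ld}(T)/T]]$-module $M$ inflated to $N_{R_\ld}(T)$ (so with trivial $T$-action), the spectral sequence
\[E_2^{a,b}=H^a\!\left(N_{R_\ld}(T)/T,\,H^b(T,M)\right)\Ra H^{a+b}(N_{R_\ld}(T),M),\]
and likewise for $G$. Because $T$ is central and acts trivially on $M$, universal coefficients over the field $\F_p$ give $H^b(T,M)\cong M\otimes_{\F_p}H^b(T,\F_p)$ as modules over the quotient, with $H^b(T,\F_p)$ one-dimensional and trivial; hence $E_2^{a,b}\cong H^a(N_{R_\ld}(T)/T,M)\otimes_{\F_p}H^b(T,\F_p)$, and similarly on the $G$-side. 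The inclusion induces a morphism of spectral sequences which on $E_2$ is the restriction map $H^a(N_{R_\ld}(T)/T,M)\to H^a(N_{G_\ld}(T)/T,M)$ tensored with the identity of $H^b(T,\F_p)$, and which converges to the restriction map $H^i(N_{R_\ld}(T),M)\to H^i(N_{G_\ld}(T),M)$. The latter is an isomorphism for all $i$ by the hypothesis recorded before the statement (item (iv) of Theorem~\ref{pcase} for subgroups of order $p$, proved in Section~\ref{orderp}). On the fibre edge $E_2^{0,b}=M^{N_{R_\ld}(T)/T}\otimes_{\F_p}H^b(T,\F_p)\to M^{N_{G_\ld}(T)/T}\otimes_{\F_p}H^b(T,\F_p)$ the morphism is an isomorphism, since $N_{G_\ld}(T)/T$ is dense in $N_{R_\ld}(T)/T$ (because $N_{R_\ld}(T)=\ol{N_{G_\ld}(T)}$ and reduction mod $T$ is a continuous open surjection) and $M$ is discrete.

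Finally I would invoke the comparison theorem for spectral sequences. With the factorization $E_2^{a,b}\cong H^a(N_{R_\ld}(T)/T,M)\otimes_{\F_p}H^b(T,\F_p)$ the situation is that of Zeeman's comparison theorem (finite-type, trivial fibre $H^\bullet(T,\F_p)$): being an isomorphism on the abutment and on the fibre edge forces the morphism to be an isomorphism on the base edge $E_2^{i,0}=H^i(N_{R_\ld}(T)/T,M)\to H^i(N_{G_\ld}(T)/T,M)$ for all $i$, which is the assertion. The step I expect to be the main obstacle is precisely this one: a direct induction on the pages of the spectral sequence does not close, because a differential leaving a column already under control may land in an a priori uncontrolled one. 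If one wishes to stay entirely inside the paper's framework rather than cite Zeeman, the same conclusion can be reached by dimension shifting: embed $M$ into the coinduced module $\operatorname{Coind}_1^{N_{R_\ld}(T)/T}(M)$ with restriction-free coefficients, which is acyclic over $N_{R_\ld}(T)/T$ and, crucially, also over $N_{G_\ld}(T)/T$ --- the latter by applying the normalizer isomorphism above to the cohomologically trivial $N_{R_\ld}(T)$-module $\operatorname{Coind}_1^{N_{R_\ld}(T)}(M)$ (with trivial $T$-action) and then comparing via the two spectral sequences --- and then induce on the cohomological degree, the degree-zero case being the density argument above. Lemma~\ref{equivfct} and item (v) of Theorem~\ref{pcase} for order-$p$ subgroups are not needed here, but guarantee the finiteness that makes the comparison unambiguous when $M$ is finite.
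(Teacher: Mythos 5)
Your route is genuinely different from the paper's. The paper does not compare the two Lyndon--Hochschild--Serre spectral sequences with fibre $T$: instead it exploits the splitting $N_{R_\ld}(T)=T\times(V_\ld\cap N_{R_\ld}(T))$ (Lemma~\ref{prop:prime_order}(ii)) to observe that $N_{R_\ld}(T)/T$ is \emph{commensurable} with $N_{R_\ld}(T)$ itself; it picks an open normal subgroup $\bar K$ of the quotient that lifts to an open normal subgroup of $N_{R_\ld}(T)$, transfers the assumed isomorphism $H^i(N_{R_\ld}(T),-)\to H^i(N_{G_\ld}(T),-)$ to the pair $(\bar K,K)$ by Shapiro's lemma, and then climbs back up through the LHS spectral sequences of $1\to\bar K\to N_{R_\ld}(T)/T\to C\to1$ and its $G$-analogue with the \emph{same finite} quotient $C$ --- i.e.\ only the easy direction ($E_2$-isomorphism implies isomorphism of abutments) is ever used. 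Your argument instead runs the comparison in the hard direction (fibre and abutment isomorphisms forcing a base isomorphism), which is exactly the step you flag as the obstacle.

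That step is the soft spot of your write-up. The statement you need is true in this situation, but Zeeman's comparison theorem as usually cited is a homological statement with ground-ring normalization in degree zero for base and fibre, whereas here the base factor carries the (possibly infinite) coefficient module $M$ and the groups mix abstract and profinite cohomology; so the invocation would have to be adapted and checked rather than quoted. Your fallback does not really remove the difficulty: the module you should embed $M$ into is the inflation of $\operatorname{Coind}_1^{N_{R_\ld}(T)/T}(M)$, which is $\operatorname{Coind}_T^{N_{R_\ld}(T)}(M)$ and not ``$\operatorname{Coind}_1^{N_{R_\ld}(T)}(M)$ with trivial $T$-action'', and its acyclicity over the dense abstract subgroup $N_{G_\ld}(T)/T$ is not formal --- proving it again requires a hard-direction comparison of the two $T$-quotient spectral sequences (using compatibility of the differentials and the edge maps), of essentially the same nature as the step it was meant to replace. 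The cheap way to close your argument within your own framework is the one the splitting hands you: since $N_{G_\ld}(T)=T\times(U_\ld\cap N_{G_\ld}(T))$ with $U_\ld\cap N_{G_\ld}(T)\subseteq V_\ld\cap N_{R_\ld}(T)$, the inclusion $N_{G_\ld}(T)\hookra N_{R_\ld}(T)$ is a product map, the K\"unneth decompositions of $H^n(-,M)$ are respected block by block, and an isomorphism that is block-diagonal is an isomorphism on each block; this yields the lemma with no comparison theorem at all, and is in the spirit of the K\"unneth argument the paper already uses for Lemma~\ref{prop:prime_order}(iii).
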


\begin{proof}The groups $N_{R_\ld}(T)=\ol{N_{G_\ld}(T)}$ and $N_{R_\ld}(T)/T=\ol{N_{G_\ld}(T)}/T$ are commensurable. Let then
$\bar K$ be a normal open subgroup of $N_{R_\ld}(T)/T$ which identifies with a normal open subgroup of $N_{R_\ld}(T)$ and let
$K$ be the inverse image of this group in $N_{G_\ld}(T)/T$. Then, $K$ also identifies with a normal finite index subgroup of $N_{G_\ld}(T)$.
From Shapiro's lemma and the above assumptions, it follows that, for every $\F_p[[\bar K]]$-module $M$ and all $i\geq 0$,
the induced map on cohomology $H^i(\bar K,M)\to H^i(K,M)$ is an isomorphism. 

By the Lyndon-Hochschild-Serre spectral sequence,
applied to the short exact sequences $1\to\bar K\to N_{R_\ld}(T)/T\to\bar C\to 1$ and $1\to K\to N_{G_\ld}(T)/T\to C\to 1$, 
where $\bar C\equiv C$ are the respective cokernels, the homomorphism $N_{G_\ld}(T)/T\hookra N_{R_\ld}(T)/T$ 
then induces the isomorphism claimed in the statement of the lemma.
\end{proof}

By the above remarks and Lemma~\ref{inductionok}, the homomorphism $\iota_T\co N_{G_\ld}(T)/T\hookra N_{R_\ld}(T)/T$, induced by $\iota$,
satisfies the hypotheses of Theorem~\ref{pcase}. From the induction hypothesis, it then follows that Theorem \ref{pcase} holds
for subgroups and elements of order $k/p$ of $N_{G_\ld}(T)/T$ and $N_{R_\ld}(T)/T$, for all $\ld\in\Ld$.

From item (i) of Theorem \ref{pcase}, applied to the finite $p$-subgroup $H/T$ of $N_{R_\ld}(T)/T$, 
we deduce that $H/T$ is conjugate in $N_{R_\ld}(T)/T$ to a finite $p$-subgroup of 
$N_{G_\ld}(T)/T$ and so  $H$ is conjugate in $N_{R_\ld}(T)$ to a finite $p$-subgroup of $N_{G_\ld}(T)\leq G$. This proves that the map
$\iota_c\co\cS_p(G)_{\leq k}/_\sim\to\cS_p(R)_{\leq k}/_\sim$ is surjective and concludes the inductive step
for the surjectivity part of item (i) of Theorem \ref{pcase}. 

Let us then prove that a subgroup $H$ of $G$ of order $k$ is hereditarily subgroup $\cR$-conjugacy distinguished. This will prove the subgroups 
part of item (ii) of Theorem \ref{pcase} and also imply that $\iota_c$ is injective when restricted to the set of subgroups of order $\leq k$,
thus completing the proof that $\iota_c\co\cS_p(G)_{\leq k}/_\sim\to\cS_p(R)_{\leq k}/_\sim$ is bijective. 

With the above notations, by Lemma~\ref{normcriterion}, it is enough to show that $H$ is subgroup $\cR$-conjugacy distinguished in $G_\ld$, 
for all $\ld\in\Ld$.
Let then $K$ be a subgroup of order $k$ of $G_\ld$ which is conjugate to $H$ by an element $r_\ld\in R_\ld$, for $\ld\in\Ld$.
By the induction hypothesis, after possibly conjugating by an element of $G_\ld$,  we can assume that $H$ and $K$ intersect
in a nontrivial central subgroup $A$ of order less than $k$ and also that $r_\ld$ preserves such subgroup, 
that is to say that $r_\ld\in N_{R_\ld}(A)$.

From the induction hypothesis, it follows that $N_{R_\ld}(A)=\ol{N_{G_\ld}(A)}$ and that the monomorphism $N_{G_\ld}(A)\hookra N_{R_\ld}(A)$
satisfies the hypotheses of Theorem \ref{pcase}. By the induction hypothesis, applied to the images 
$H/A$, $K/A$ and $\bar r_\ld$ of $H$, $K$ and $r_\ld$ in the quotient $N_{R_\ld}(A)/A$, we have that $H/A$ and $K/A$ are conjugate
by an element $\bar r_\ld'\in N_{G_\ld}(A)/A$ and then that $H$ and $K$ are conjugate by an element $r_\ld'\in N_{G_\ld}(A)\leq G_\ld$, 
for all $\ld\in\Ld$. Hence, $H$ is hereditarily subgroup $\cR$-conjugacy distinguished in $G$.

Let us now show that elements of order $k$ of $G$ are hereditarily $\cR$-conjugacy distinguished. Let $x\in G$ be such an element.
With the above notations, let also $U_\ld:=\iota^{-1}(V_\ld)$, for $\ld\in\Ld$. By the previous part of the proof, we know, in particular, 
that the subgroup $\la x\ra$ is subgroup $\cR$-conjugacy distinguished in $U_\ld\la x\ra$, for all $\ld\in\Ld$. From the fact that both groups 
$U_\ld\la x\ra\cong U_\ld\rtimes\la x\ra$ and $V_\ld\la x\ra$ retract onto $\la x\ra$, it easily follows that $x$ is $\cR$-conjugacy 
distinguished in $U_\ld\la x\ra$, for all $\ld\in\Ld$. We then conclude by Lemma~\ref{centcriterion}.

To prove that the set $\cS_p(G)_{\leq k}/_{\sim}$ is finite, let us just observe that: by (i) of Section~\ref{orderp}, the group $G$ contains only finitely many 
subgroups of order $p$; any finite $p$-subgroup $H$ of $G$ contains a central subgroup $C$ of order $p$; by the induction hypothesis, $N_G(C)/C$ 
contains, up to conjugation, only finitely many finite $p$-subgroups of order $\leq k/p$, so that $N_G(C)$ contains, up to conjugation, 
only finitely many finite $p$-subgroups of order $\leq k$. We thus conclude that $\cS_p(G)_{\leq k}/_{\sim}$ is finite.
\smallskip

\noindent
(iii): By Lemma~\ref{normcriterion} and Lemma~\ref{centcriterion}, we immediately conclude that $N_{R}(H)=\ol{N_G(H)}$ and $C_{R}(H)=\ol{C_G(H)}$, 
for all subgroups $H$ of $G$ of order $k$, which completes the inductive step for item (iii) of Theorem~\ref{pcase}. 
\smallskip

\noindent
(iv): Let  $H$ be a subgroup of $G$ of order $k$ and let $T$ be a central subgroup of $H$ of order $p$, as above.
Let us observe that $N_{R}(H)$ contains $N_{N_{R}(T)}(H)=N_{N_{R}(H)}(T)$ as a subgroup of finite index.
Similarly, $N_{N_{G}(T)}(H)$ has finite index in $N_{G}(H)$. 
Let us then observe that $N_{N_{R}(T)}(H)$ is also commensurable with $N_{N_{R}(T)/T}(H/T)$ and, similarly,
$N_{N_{G}(T)}(H)$ is commensurable with $N_{N_{G}(T)/T}(H/T)$.

By the induction hypothesis, applied to the subgroup $H/T$, and item (iv) of Theorem~\ref{pcase}, the natural homomorphism 
$N_{N_{G}(T)/T}(H/T)\hookra N_{N_{R}(T)/T}(H/T)$ has dense image and induces an isomorphism on cohomology. 

Since $N_{R}(H)$ and $N_{G}(H)$ are commensurable with $N_{N_{R}(T)/T}(H/T)$ and $N_{N_{G}(T)/T}(H/T)$, respectively, it follows that
the natural homomorphism $N_{G}(H)\hookra N_{R}(H)$ has also dense image. Moreover, by Shapiro's lemma and Lyndon-Hochschild-Serre 
spectral sequence (cf.\ the proof of Lemma~\ref{inductionok}), this homomorphism induces an isomorphism on cohomology.
Hence, (iv) of Theorem \ref{pcase} holds for the subgroup $H$ of $G$.
\smallskip

\noindent
(v):  By the induction hypothesis, applied to the subgroup $H/T$ of $N_{N_{G}(T)/T}(H/T)$, item (v) of Theorem~\ref{pcase} holds for 
the quotient of this group by $H/T$. Since the groups $N_{G}(H)/H$ and $N_{N_{G}(T)/T}(H/T)/(H/T)$ are $\cR$-commensurable,
by Lemma~\ref{equivfct}, item (v) of Theorem~\ref{pcase} then also holds for the quotient $N_{G}(H)/H$.

\section{The case of finite solvable subgroups}
We will actually prove the following stronger version of Theorem~C:

\begin{theorem}\label{maintheorem} Let $G$ be a group and $\cC$ a class of finite groups. Let us assume that:
\begin{enumerate}
\item $G$ is $\cC$-good, residually $\cC$ and of finite virtual $p$-cohomological type for all primes $p$ such that $\Z/p\in\cC$.
\item For every solvable $\cC$-subgroup $H$ of $G$, the normalizer $N_G(H)$ of $H$ in $G$ satisfies the previous hypothesis (i)
and, moreover, its closure $\ol{N_G(H)}$ in 
the pro-$\cC$ completion $G_{\wC}$ of $G$  coincides with the pro-$\cC$ completion of $N_G(H)$.
\end{enumerate}
Let $\iota\co G\hookra G_{\wC}$ be the natural monomorphism. Then, we have:
\begin{enumerate}
\item $\iota$ induces a bijection of finite sets $\iota_c\co\cS_{f\!s}(G)/_{\sim}\sr{\sim}{\to}\cS_{f\!s}(G_{\wC})/_{\sim}$. 
\item $\cC$-torsion elements and solvable $\cC$-subgroups of $G$ are hereditarily (resp.\ subgroup) $\cC$-conjugacy distinguished.
\item For any $\cC$-subgroup $H$ of $G$, there holds $C_{G_\wC}(H)=\ol{C_G(H)}$. If, moreover, $H$ is solvable,
we also have $N_{G_\wC}(H)=\ol{N_G(H)}$.
\end{enumerate}
\end{theorem}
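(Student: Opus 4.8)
The plan is to reduce Theorem~\ref{maintheorem} to the already-established Theorem~\ref{pcase} by an induction on the order of the finite solvable subgroups, exploiting the fact that every nontrivial finite solvable group has a nontrivial characteristic (in particular, normal) abelian subgroup, indeed a nontrivial characteristic $p$-subgroup for some prime $p$. First I would observe that hypothesis~(i), namely $\cC$-goodness of $G$ together with residual $\cC$-ness and finite virtual $p$-cohomological type for all relevant $p$, is precisely what is needed so that the monomorphism $\iota\co G\hookra G_\wC$ satisfies the hypotheses of Theorem~\ref{pcase} for each such $p$ (with $R=G_\wC$; note $\cR\subseteq\cC$ and goodness gives the cohomology isomorphism in every degree, in particular for $i\gg0$). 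Hence Theorem~\ref{pcase} already delivers items (i)--(iii) of the present theorem when restricted to finite $p$-subgroups: the bijection $\cS_p(G)/_\sim\to\cS_p(G_\wC)/_\sim$, the hereditary (subgroup) $\cC$-conjugacy distinguishedness of $p$-torsion elements and finite $p$-subgroups, and the normalizer/centralizer identities for finite $p$-subgroups. This is the base case of the induction.

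For the inductive step, suppose $H$ is a finite solvable subgroup of $G$ of order $k>1$ and that the theorem is known for all finite solvable subgroups (and $\cC$-torsion elements) of order $<k$ of any group satisfying the hypotheses. Pick a prime $p$ with $\Z/p\in\cC$ dividing $|H|$ and let $T$ be the maximal normal $p$-subgroup $O_p(H)$, or a nontrivial characteristic subgroup thereof of order $p$; in fact it is cleanest to take $T\leq H$ a nontrivial \emph{characteristic} subgroup of $H$ of prime-power order, so that any conjugation matching two copies of $H$ also matches the corresponding copies of $T$. By Theorem~\ref{pcase} applied to the $p$-subgroup $T$ (after conjugating we may assume $T\leq G$), we get $N_{G_\wC}(T)=\ol{N_G(T)}$, the cohomology isomorphism $H^i(N_{G_\wC}(T),M)\sr{\sim}{\to}H^i(N_G(T),M)$ for all discrete $\F_p[[N_{G_\wC}(T)]]$-modules $M$ and all $i\geq0$, and that $N_G(T)/T\hookra N_{G_\wC}(T)/T$ is of finite virtual $p$-cohomological type with respect to that embedding. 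Crucially, hypothesis~(ii) of the present theorem, applied to the solvable $\cC$-subgroup $T$, says $\ol{N_G(T)}$ is the pro-$\cC$ completion of $N_G(T)$ \emph{and} $N_G(T)$ itself satisfies hypothesis~(i); combined with the cohomology isomorphism from Theorem~\ref{pcase} and a Lyndon--Hochschild--Serre argument (exactly as in Lemma~\ref{inductionok}) this shows the quotient $N_G(T)/T$, with its embedding into $\ol{N_G(T)}/T=\widehat{(N_G(T)/T)}_{\wC}$, again satisfies hypothesis~(i) of the theorem --- and one checks hypothesis~(ii) persists as well since normalizers in $N_G(T)/T$ of solvable subgroups lift to normalizers in $N_G(T)$ of solvable subgroups, which are covered by the original hypothesis~(ii). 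Therefore the inductive hypothesis applies to $N_G(T)/T\hookra N_{G_\wC}(T)/T$.

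Now run the same scheme as in the proof of Theorem~\ref{pcase}, items (i) and (ii): given a finite solvable subgroup $\bar H$ of $G_\wC$ of order $k$, it contains a nontrivial characteristic prime-power-order subgroup $T$; after conjugating, $T\leq G$ and $\bar H<N_{G_\wC}(T)$; passing to $\bar H/T$ inside $N_{G_\wC}(T)/T$, which is a finite solvable group of order $<k$, the inductive hypothesis conjugates it into $N_G(T)/T$, hence $\bar H$ is conjugate into $N_G(T)\leq G$; this gives surjectivity of $\iota_c$ on $\cS_{f\!s}(G)_{\leq k}/_\sim$. For injectivity and hereditary subgroup $\cC$-conjugacy distinguishedness, given $H,K\leq G_\ld$ (where $G_\ld:=\iota^{-1}(V_\ld H)$ for a cofinal system $\{V_\ld\}$ of open normal subgroups) conjugate by $r_\ld$, use the inductive hypothesis first to arrange (after conjugating in $G_\ld$) that $H\cap K$ contains a common nontrivial characteristic prime-power subgroup $A$ preserved by $r_\ld$, then push down to $N_{G_\wC}(A)/A$ where $H/A,K/A$ have order $<k$, conjugate them in $N_G(A)/A$, and lift. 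The $\cC$-conjugacy distinguishedness of order-$k$ elements $x$ follows as before via the retraction $V_\ld\langle x\rangle\tura\langle x\rangle$ and Lemma~\ref{centcriterion}, using that $\langle x\rangle$ is cyclic hence solvable. Item (iii) for solvable $H$ of order $k$ then follows from Lemma~\ref{normcriterion} and Lemma~\ref{centcriterion}; the centralizer identity $C_{G_\wC}(H)=\ol{C_G(H)}$ for an \emph{arbitrary} (not necessarily solvable) $\cC$-subgroup $H$ is obtained by noting $C_{G_\wC}(H)=\bigcap_{x\in H}C_{G_\wC}(x)$ and each $x$ is $\cC$-torsion of prime-power-factorizable order, reducing to the centralizer statement for elements, which is covered by the element-wise hereditary $\cC$-conjugacy distinguishedness proved above together with Lemma~\ref{centcriterion}; alternatively one inducts on $|H|$ using a characteristic subgroup. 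Finiteness of $\cS_{f\!s}(G)_{\leq k}/_\sim$ is handled exactly as in Theorem~\ref{pcase}: finitely many order-$p$ subgroups, and for each the inductive finiteness of $\cS_{f\!s}(N_G(T)/T)_{\leq k/|T|}/_\sim$.

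\textbf{Main obstacle.} The delicate point is hypothesis~(ii): it must be \emph{propagated} along the induction. That is, when we replace $(G,G_\wC)$ by $(N_G(T)/T,\,N_{G_\wC}(T)/T)$, we must verify that this new pair again satisfies both hypotheses~(i) and (ii) of Theorem~\ref{maintheorem}, so that the inductive hypothesis is genuinely applicable. Hypothesis~(i) for the quotient comes from Theorem~\ref{pcase}(iv),(v) plus the pro-$\cC$ completion statement in hypothesis~(ii) applied to $T$ and a spectral sequence argument à la Lemma~\ref{inductionok}. Hypothesis~(ii) for the quotient requires that for every solvable $\cC$-subgroup $\bar L$ of $N_G(T)/T$, its normalizer's closure in $\overline{N_G(T)}/T$ equals that normalizer's pro-$\cC$ completion --- this is where one needs the original hypothesis~(ii) of the theorem to have been stated for \emph{all} solvable $\cC$-subgroups of $G$ (not just $H$ itself), since $N_{N_G(T)/T}(\bar L)$ is, up to the finite kernel $T$, the normalizer in $G$ of the solvable $\cC$-subgroup which is the preimage of $\bar L$, and one invokes that the pro-$\cC$-completion-equals-closure property is inherited by subgroups commensurable with the given one. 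Making this commensurability/inheritance bookkeeping precise --- and checking that ``finite virtual $p$-cohomological type with respect to the embedding'' behaves well under these reductions, via Lemma~\ref{equivfct} --- is the technical heart of the argument; everything else is a faithful transcription of the proof of Theorem~\ref{pcase} with ``$p$-subgroup'' replaced by ``solvable subgroup'' and ``central subgroup of order $p$'' replaced by ``characteristic prime-power subgroup''.
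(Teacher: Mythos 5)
Your proposal is correct and follows essentially the same route as the paper: induction on the order of finite solvable subgroups with Theorem~\ref{pcase} as the base case, reduction via a nontrivial normal subgroup of $H$ (the paper takes a normal abelian subgroup $A$ where you take a characteristic prime-power subgroup, an immaterial difference), the $G_\ld=\iota^{-1}(V_\ld H)$ trick with Lemmas~\ref{normcriterion} and~\ref{centcriterion} for hereditary distinguishedness, and $C_{G_\wC}(H)=\bigcap_{x\in H}\ol{C_G(x)}$ for item (iii). The ``main obstacle'' you isolate --- propagating hypotheses (i) and (ii) to the pair $(N_G(T)/T,\,N_{G_\wC}(T)/T)$ via commensurability and a Lyndon--Hochschild--Serre argument --- is precisely the content of the paper's Lemma~\ref{indtrick}, and your sketch of it matches the paper's.
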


For the proof of Theorem~\ref{maintheorem}, we will need the following lemma:

\begin{lemma}\label{indtrick}If $G$ is a group which satisfies the hypotheses of Theorem~\ref{maintheorem} and $A$ a finite abelian subgroup 
of $G$, then also the quotient group $N_{G}(A)/A$ satisfies the hypotheses of Theorem~\ref{maintheorem}.
\end{lemma}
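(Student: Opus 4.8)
The plan is to verify that $Q:=N_G(A)/A$ satisfies both hypotheses, (i) and (ii), of Theorem~\ref{maintheorem}; here $A$ is understood to be a (finite abelian, hence solvable) $\cC$-subgroup of $G$, as it will be in the applications. Write $N:=N_G(A)$. Applying hypothesis (ii) of Theorem~\ref{maintheorem} to the solvable $\cC$-subgroup $A$ already tells us that $N$ satisfies hypothesis (i) and that $\ol{N}=N_{\wC}$ inside $G_{\wC}$. The two elementary inheritance facts I will use repeatedly are: \textbf{(a)} hypothesis (i) passes to every subgroup that is open for the pro-$\cC$ topology, in particular to every finite-index subgroup (immediate for residual $\cC$-ness; for $\cC$-goodness, use Shapiro's lemma applied to $M_{\wC}=\ol{M}$; for finite virtual $p$-cohomological type, use the characterisation of Lemma~\ref{equivfct}); and \textbf{(b)} hypothesis (i) passes to the quotient $\Delta/A'$ of a group $\Delta$ satisfying (i) by a finite normal $\cC$-subgroup $A'$, together with the identity $(\Delta/A')_{\wC}=\Delta_{\wC}/A'$. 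For (b): residual $\cC$-ness of $\Delta/A'$ and the completion identity follow by pulling finite $\cC$-quotients of $\Delta/A'$ back along $\Delta\tura\Delta/A'$; $\cC$-goodness of $\Delta/A'$ follows by comparing the Lyndon--Hochschild--Serre spectral sequences of $1\to A'\to\Delta\to\Delta/A'\to 1$ and of its pro-$\cC$ analogue $1\to A'\to\Delta_{\wC}\to(\Delta/A')_{\wC}\to 1$, using $\cC$-goodness of $\Delta$ and of the finite group $A'$ together with the Zeeman comparison theorem; and finite virtual $p$-cohomological type is checked directly by means of a torsion-free finite-index subgroup $U\leq\Delta$ with $\cd_p(U)<\infty$ chosen so that $U\cap A'=1$, which then embeds as a finite-index subgroup of $\Delta/A'$, Shapiro's lemma giving the finiteness of $H^i(U,\F_p)$ and hence (via Lyndon--Hochschild--Serre) of $H^i(\Delta/A',M)$ for all finite $M$.

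With (a) and (b) at hand, hypothesis (i) for $Q=N/A$ is immediate from (b) applied to $\Delta=N$, $A'=A$; moreover $Q_{\wC}=N_{\wC}/A=\ol{N}/A$.

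To verify hypothesis (ii) for $Q$, fix a solvable $\cC$-subgroup $\bar H\leq Q$ and let $\tilde H\leq N$ be its full preimage, so that $A\trianglelefteq\tilde H$, $\tilde H/A=\bar H$, and $\tilde H$ is a solvable $\cC$-subgroup of $G$ ($\cC$ being extension-closed). A direct computation gives $N_Q(\bar H)=N_N(\tilde H)/A$ with $N_N(\tilde H)=N_G(\tilde H)\cap N_G(A)=N_{N_G(\tilde H)}(A)$. The key observation is that $N_N(\tilde H)$ is open for the pro-$\cC$ topology of $N_G(\tilde H)$: since $N_G(\tilde H)$ is residually $\cC$ and $\tilde H$ is finite, there is a subgroup $W\trianglelefteq N_G(\tilde H)$, open for the pro-$\cC$ topology, with $W\cap\tilde H=1$; then $[W,\tilde H]\subseteq W\cap\tilde H=1$, so $W$ centralises $\tilde H$ and in particular normalises $A\leq\tilde H$, i.e.\ $W\leq N_N(\tilde H)$. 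Now hypothesis (ii) of Theorem~\ref{maintheorem}, applied to the solvable $\cC$-subgroup $\tilde H$ of $G$, gives that $N_G(\tilde H)$ satisfies (i) and that $\ol{N_G(\tilde H)}=(N_G(\tilde H))_{\wC}$; by (a) the pro-$\cC$-open subgroup $N_N(\tilde H)$ of $N_G(\tilde H)$ also satisfies (i), and by (b) (with $\Delta=N_N(\tilde H)$ and $A'=A$, which is normal in $N_N(\tilde H)$ by construction) so does $N_N(\tilde H)/A=N_Q(\bar H)$.

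The remaining step --- the closure identity $\ol{N_Q(\bar H)}=(N_Q(\bar H))_{\wC}$ --- is the one I expect to be the main obstacle. Using $Q_{\wC}=\ol{N}/A$ and fact (b) (so that the continuous, open, closed map $N_{\wC}\to N_{\wC}/A=Q_{\wC}$ carries closures to closures), it reduces to $\ol{N_N(\tilde H)}=(N_N(\tilde H))_{\wC}$, with closures taken in $N_{\wC}=\ol{N}\leq G_{\wC}$. Here one combines the two closure hypotheses that Theorem~\ref{maintheorem}(ii) supplies for $G$, namely $\ol{N_G(A)}=(N_G(A))_{\wC}$ and $\ol{N_G(\tilde H)}=(N_G(\tilde H))_{\wC}$ --- i.e.\ that the full pro-$\cC$ topologies of $N_G(A)$ and of $N_G(\tilde H)$ are the ones induced from $G_{\wC}$ --- with the openness of $N_N(\tilde H)$ in $N_G(\tilde H)$ established above: a subgroup that is open for the pro-$\cC$ topology of a group inherits from it its own full pro-$\cC$ topology (a standard fact when $\cC$ is extension-closed, obtained by passing to cores of intersections), whence $\ol{N_N(\tilde H)}^{G_{\wC}}=(N_N(\tilde H))_{\wC}$; and since $N_N(\tilde H)\leq N_G(A)=N$ and $N_{\wC}=\ol{N}$ is closed in $G_{\wC}$, the closure of $N_N(\tilde H)$ in $N_{\wC}$ agrees with its closure in $G_{\wC}$. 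The delicate point is that the obvious bound $[N_G(\tilde H):N_N(\tilde H)]\mid|\aut(\tilde H)|$ is by itself useless, since $\aut(\tilde H)$ need not be a $\cC$-group; it is the commutator identity $[W,\tilde H]=1$ that realises $N_N(\tilde H)$ as a \emph{pro-$\cC$}-open subgroup, and it is the closure hypotheses for $G$ that let one transfer this to a statement inside $G_{\wC}$. Everything else is a routine application of Shapiro's lemma and of the Lyndon--Hochschild--Serre spectral sequence.
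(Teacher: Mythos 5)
Your proposal is correct and follows essentially the same route as the paper's proof: pass to the preimage $\tilde H$ of the solvable $\cC$-subgroup, identify $N_{N_G(A)/A}(\bar H)$ with $N_{N_G(\tilde H)}(A)/A$, note that $N_{N_G(\tilde H)}(A)$ is $\cC$-open in $N_G(\tilde H)$ (the paper merely asserts this; your commutator argument $[W,\tilde H]\subseteq W\cap\tilde H=1$ is the justification), and then transfer hypothesis (i) along $\cC$-open subgroups and along quotients by the finite subgroup $A$ via Shapiro's lemma and the Lyndon--Hochschild--Serre spectral sequence, which is exactly the paper's ``$\cC$-commensurability'' argument; you in fact go beyond the printed proof by also checking the closure half of hypothesis (ii) for the quotient. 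The only point I would adjust is your appeal to the Zeeman comparison theorem for goodness of $\Delta/A'$ in fact (b): the simpler commensurability argument (choose $W$ $\cC$-open normal with $W\cap A'=1$, so that $W\cong WA'/A'$ is $\cC$-open in $\Delta/A'$ and goodness passes back and forth by Shapiro and Lyndon--Hochschild--Serre), which is what the paper uses and which you yourself employ elsewhere, is cleaner and avoids the delicate ``total plus fiber implies base'' direction of the comparison theorem.
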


\begin{proof}Since the groups $N_{G}(A)$ and $N_{G}(A)/A$ are $\cC$-commensurable,
hypothesis (ii) of Theorem~\ref{maintheorem} implies that $N_{G}(A)/A$ satisfies hypothesis (i) of Theorem~\ref{maintheorem}.
Let us then prove that, for every solvable $\cC$-subgroup $B$ of $N_{G}(A)/A$, the normalizer $N_{N_{G}(A)/A}(B)$ of $B$ in $N_{G}(A)/A$ 
also satisfies hypothesis (i) of Theorem~\ref{maintheorem}.

Let $\td{B}$ be the inverse image of $B$ in $N_{G}(A)$. Then, $\td{B}$ is also a solvable $\cC$-group which contains $A$ as a normal subgroup.
The normalizer $N_{N_{G}(\td{B})}(A)$ is a $\cC$-open subgroup of the normalizer $N_{G}(\td{B})$. Since by hypothesis (ii) of 
Theorem~\ref{maintheorem}, the latter group satisfies hypothesis (i) of Theorem~\ref{maintheorem}, $N_{N_{G}(\td{B})}(A)$
also satisfies hypothesis (i) of Theorem~\ref{maintheorem}.

Now, the image of $N_{N_{G}(\td{B})}(A)$ in $N_{G}(A)/A$ is the normalizer $N_{N_{G}(A)/A}(B)$.
Therefore, the two groups $N_{N_{G}(\td{B})}(A)$ and $N_{N_{G}(A)/A}(B)$ are $\cC$-commensurable and then, by Shapiro's lemma, 
the latter contains a $\cC$-open normal subgroup $K$ which satisfies hypothesis (i) of Theorem~\ref{maintheorem}. 
By Lyndon-Hochschild-Serre spectral sequence arguments similar to the ones in the proof of Lemma~\ref{inductionok} and Lemma~\ref{equivfct}, 
we then conclude that the group $N_{N_{G}(A)/A}(B)$ also satisfies all hypotheses of item (i) of Theorem~\ref{maintheorem}.
\end{proof}

\begin{proof}[Proof of Theorem~\ref{maintheorem}]
The proof is by induction on the cardinality of finite solvable subgroups of $G$. The base for the induction is provided by Theorem~\ref{pcase} 
which implies, under hypothesis (i) of Theorem~\ref{maintheorem}, all items of Theorem~\ref{maintheorem} for the finite $p$-subgroups 
of $G$ and its torsion elements of order a power of $p$, for $\Z/p\in\cC$. 

Let us then assume that Theorem~\ref{maintheorem} holds for solvable subgroups of $G$ of order less than a fixed integer $k>0$ and let us prove 
that its statement then holds for the solvable subgroups of $G$ of order $k$. 

Let $H$ be a solvable subgroup of $G_\wC$ of order $k$ and let $P(H)$ be the (finite) set of primes which divide the order of $H$.
By hypothesis, we have that, in particular, $\vcd_p(G)<\infty$ and $G$ is $p$-good, for $p\in P(H)$. This implies that $G_\wC$ contains a 
$P(H)$-torsion free open subgroup $V$. Let then $\{V_\ld\}_{\ld\in\Ld}$ be a fundamental system of neighborhoods of the identity of 
$G_\wC$ consisting of open normal subgroups contained in $V$, let $U_\ld=\iota^{-1}(V_\ld)$ and put $G_\wC^\ld:=V_\ld H$ and 
$G^\ld:=\iota^{-1}(G_\wC^\ld)$, for $\ld\in\Ld$. It is clear that $(G^\ld)_\wC\cong G_\wC^\ld$ and that $G^\ld$ satisfies the hypotheses 
of Theorem~\ref{maintheorem}, for all $\ld\in\Ld$.
\smallskip

\noindent
(i): The first step of the proof consists in showing that there is an element $x\in G_\wC^\ld$ such that $H^x$ is contained in $G^\ld\subset G_\wC^\ld$,
so that the map $\iota_c\co\cS_{f\!s}(G)/_{\sim}\to\cS_{f\!s}(G_{\wC})/_{\sim}$ is surjective when restricted to conjugacy classes of solvable
$\cC$-subgroups of order $k$.

The subgroup $H$ of $G_\wC^\ld$ contains a \emph{proper} nontrivial normal abelian subgroup $A$, so that
$H<N_{G_\wC^\ld}(A)$. By the induction hypothesis, possibly after conjugating by an element of $G_\wC^\ld$, 
we can assume that $A$ is contained in $G^\ld$ and then, again by the induction hypothesis, 
the hypothesis (ii) of Theorem~\ref{maintheorem} and Lemma~\ref{indtrick}, we can also assume that:
\begin{itemize}
\item $N_{G_\wC^\ld}(A)/A\cong(N_{G^\ld}(A)/A)_\wC$;
\item $N_{G^\ld}(A)/A$ satisfies the hypotheses of Theorem~\ref{maintheorem}.
\end{itemize}

From the induction hypothesis, applied to the group $N_{G^\ld}(A)/A$ and its solvable subgroups of order $k/|A|$, it follows that, for some 
$\bar x\in N_{G_\wC^\ld}(A)/A$ the conjugate $(H/A)^{\bar x}$ is contained in $N_{G^\ld}(A)/A$. Let $x\in N_{G_\wC^\ld}(A)$ be a lift of $\bar x$. 
The conjugate $H^x$ is then contained in $N_{G^\ld}(A)$, which proves the claim above. Let us prove that the restriction of $\iota_c$
 to conjugacy classes of solvable $\cC$-subgroups of order $k$ is also injective.
 
The same argument as in the proof of item (ii) of Theorem~\ref{pcase} shows that $H$ is subgroup $\cC$-conjugacy distinguished in $G^\ld$,
for all $\ld\in\Ld$. By Lemma~\ref{normcriterion}, this implies that $H$ is hereditarily subgroup $\cC$-conjugacy distinguished in $G$. This proves
both that $\iota_c$ is injective, when restricted to conjugacy classes of solvable $\cC$-subgroups of order $k$, and the second half of the 
second item for solvable $\cC$-subgroups of order $k$. 

The fact that the set of conjugacy classes of solvable $\cC$-subgroups of order $k$ of $G$ is finite follows, by induction, from an argument similar to the one
at the end of (ii) in Section~\ref{pcaseproof}. Let us first observe that any nontrivial solvable $\cC$-subgroup $H$ of $G$ contains a nontrivial normal 
elementary abelian $p$-subgroup $A_p$, for some $p$ such that $\Z/p\in\cC$. By the induction hypothesis, we have that 
$\cS_{f\!s}(N_G(A_p)/A_p)_{\leq k-1}$ and then $\cS_{f\!s}(N_G(A_p))_{\leq k}$ are finite. Let us then observe that, since $G$ 
is virtually $p$-torsion free, for $\Z/p\in\cC$, from (i) of Corollary~B, it follows that the set of conjugacy classes of all elementary abelian 
$p$-subgroups $A_p$ of $G$, for $\Z/p\in\cC$, is finite. These facts together imply that $\cS_{f\!s}(G)_{\leq k}$ is finite.
\smallskip

\noindent
(ii): We have already proved that solvable $\cC$-subgroups of $G$ of order $k$ are hereditarily subgroup $\cC$-conjugacy distinguished. 
Let us then prove that $\cC$-torsion elements of exponent $\leq k$ are also hereditarily $\cC$-conjugacy distinguished. 

Let $x\in G$ be a $\cC$-torsion element. We already know that the subgroup $H:=\la x\ra$ is subgroup 
$\cC$-conjugacy distinguished in $U_\ld H$, for all $\ld\in\Ld$. From the fact that the group $U_\ld H\cong U_\ld\rtimes H$ 
and its pro-$\cC$ completion $V_\ld H$ retract onto $H$, it then easily follows that $x$ is $\cC$-conjugacy distinguished 
in $U_\ld H$, for all $\ld\in\Ld$, and we conclude by Lemma~\ref{centcriterion}.
\smallskip

\noindent
(iii): By Lemma~\ref{centcriterion} and item (ii) of the theorem, for $x\in G$ a $\cC$-torsion element of exponent $\leq k$, 
we have $C_{G_\wC}(x)=\ol{C_G(x)}$. For any $\cC$-subgroup $H$ of $G$ of order $k$, we then have: 
\[C_{G_\wC}(H)=\bigcap_{x\in H}C_{G_\wC}(x)=\bigcap_{x\in H}\ol{C_G(x)}=\ol{C_G(H)}.\]
The identity $N_{G_\wC}(H)=\ol{N_G(H)}$ for a solvable $\cC$-subgroup of $G$ instead already follows from Lemma~\ref{normcriterion}
and item (ii) of the theorem.
\end{proof}

\section{Applications}
\subsection{Hyperelliptic mapping class groups of surfaces}
A \emph{marked hyperelliptic surface $(S,\cP,\u)$} is the data consisting of a closed connected oriented differentiable surface $S$, 
together with a hyperelliptic involution $\u$ on $S$ (i.e.\ a self-diffeomorphism $\u$ of $S$ such that $\u^2=\mathrm{id}_S$ 
and the quotient surface $S/\langle\u\rangle$ has genus $0$) and a finite set $\cP$ of points on $S$. We always assume
that $S\ssm\cP$ has negative Euler characteristic.

Let $\G(S)$ (resp.\ $\G(S,\cP)$) be the mapping class group of the surface $S$ (resp.\ of the marked surface $(S,\cP)$).
Let us also denote by $\u$ the image of the involution $\u$ in the mapping class group $\G(S)$ 
(note that this image is trivial if and only if $g(S)=0$ and $S$ is not hyperbolic).

The \emph{hyperelliptic mapping class group} $\U(S)$ of the hyperelliptic surface $(S,\u)$ is then defined to be the centralizer of $\u$ in $\G(S)$
and the \emph{hyperelliptic mapping class groups $\U(S,\cP)$} of the marked hyperelliptic surface  $(S,\u,\cP)$ 
is defined to be the inverse images of $\U(S)$ by the natural epimorphism $\G(S,\cP)\to\G(S)$.

If the surface $S$ has a boundary $\partial S$, let $\G(S,\dS)$ and $\G(S,\cP,\dS)$ be the associated \emph{relative} mapping class groups,
that is to say the group of relative, with respect to the subspace $\cP\cup\dS$, isotopy classes of orientation preserving 
diffeomorphisms of $S$.

The \emph{relative hyperelliptic mapping class group $\U(S,\dS)$ of the hyperelliptic surface with boundary $(S,\u,\dS)$}
is the centralizer of the hyperelliptic involution $\u$ in the relative mapping class group 
$\G(S,\dS)$ and the \emph{relative hyperelliptic mapping class group $\U(S,\cP,\dS)$} is the inverse images of $\U(S,\dS)$
via the natural epimorphism $\G(S,\cP,\dS)\to\G(S,\dS)$.
Let us observe that, for $g(S)\leq 2$, all (relative) mapping class groups are hyperelliptic in the sense of the above definitions.

Let $\wh{\U}(S,\cP,\dS)$ be the profinite completion of the relative hyperelliptic mapping class group $\U(S,\cP,\dS)$.
From the similar well-known statement for mapping class groups, it  follows immediately that the natural homomorphism
$\iota\co\U(S,\cP,\dS)\to\wh{\U}(S,\cP,\dS)$ is injective. Let us show that it also induces an isomorphism on cohomology,
that is to say that $\U(S,\cP,\dS)$ is good.

In Theorem~7.4 of \cite{Hyp}, it was proved that all hyperelliptic mapping class groups $\U(S,\cP)$ are good. The relative and standard hyperelliptic
mapping class groups are related by the short exact sequence:
\begin{equation}\label{relative}
1\to\prod_{i=1}^k\Z\to\U(S,\cP,\dS)\to\U(S,\cP)\to 1,
\end{equation}
where the free abelian group $\prod_{i=1}^k\Z$ is generated by the Dehn twists about the boundary components.
It follows from the fact that $\U(S,\cP)$ is good (cf.\ Section~2.6 in \cite{Serre}), but it is not difficult to prove directly, that the profinite completions of
this short exact sequence is also exact:
\begin{equation}\label{relativeprof}
1\to\prod_{i=1}^k\ZZ\to\wh{\U}(S,\cP,\dS)\to\wh{\U}(S,\cP)\to 1.
\end{equation}
By the Lyndon-Hochschild-Serre spectral sequence and the short exact sequences~\eqref{relative} and~\eqref{relativeprof}, we then have:

\begin{proposition}\label{Hypgood}The relative hyperelliptic mapping class group $\U(S,\cP,\dS)$ of the marked hyperelliptic surface 
with boundary $(S,\u,\cP,\dS)$ is good.
\end{proposition}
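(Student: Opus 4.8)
The plan is to reduce the goodness of $\U(S,\cP,\dS)$ to the goodness of $\U(S,\cP)$, which is already established in Theorem~7.4 of \cite{Hyp}, by exploiting the short exact sequences~\eqref{relative} and~\eqref{relativeprof} together with a Lyndon--Hochschild--Serre spectral sequence comparison. First I would record that the profinite completion functor is right exact, so the surjectivity in~\eqref{relativeprof} is automatic; the injectivity of $\prod_{i=1}^k\ZZ\to\wh{\U}(S,\cP,\dS)$ and the identification of its image with the kernel of $\wh{\U}(S,\cP,\dS)\to\wh{\U}(S,\cP)$ are the point where one uses that $\U(S,\cP)$ is good (via the criterion in Section~2.6 of \cite{Serre}: a central, or more generally normal, extension of a good group by a group that is good and has finitely generated cohomology stays ``exact on profinite completions''), as already indicated in the excerpt. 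So I would take~\eqref{relativeprof} as given.

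Next, for a finite $\G$-module $M$, I would compare the two first-quadrant cohomology spectral sequences
\[
E_2^{p,q}=H^p\!\left(\U(S,\cP),H^q(\textstyle\prod_{i=1}^k\Z,M)\right)\Ra H^{p+q}(\U(S,\cP,\dS),M)
\]
and
\[
\wh{E}_2^{p,q}=H^p\!\left(\wh{\U}(S,\cP),H^q(\textstyle\prod_{i=1}^k\ZZ,M)\right)\Ra H^{p+q}(\wh{\U}(S,\cP,\dS),M),
\]
attached respectively to~\eqref{relative} and~\eqref{relativeprof}. The natural map between the two exact sequences induces a morphism of spectral sequences $\wh{E}_\ast^{\ast,\ast}\to E_\ast^{\ast,\ast}$ compatible with the abutments. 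On the $E_2$ page this is the map
\[
H^p\!\left(\wh{\U}(S,\cP),H^q(\textstyle\prod_{i=1}^k\ZZ,M)\right)\lra H^p\!\left(\U(S,\cP),H^q(\textstyle\prod_{i=1}^k\Z,M)\right).
\]
Here $H^q(\prod_{i=1}^k\ZZ,M)\to H^q(\prod_{i=1}^k\Z,M)$ is an isomorphism because $\prod\Z$ is good (it is finitely generated abelian, hence polycyclic, hence good, and free abelian groups are even ``strongly good''), and moreover this isomorphism is equivariant for the actions of $\wh{\U}(S,\cP)$ and $\U(S,\cP)$; then applying $H^p$ and using the goodness of $\U(S,\cP)$ (Theorem~7.4 of \cite{Hyp}) — together with the fact that the cohomology of the mapping class group with these coefficients is of the sort to which Serre's goodness criterion applies — gives that the $E_2$-map is an isomorphism in every bidegree. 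A morphism of first-quadrant spectral sequences that is an isomorphism on $E_2$ is an isomorphism on $E_\infty$ and hence on the abutments, so $H^n(\wh{\U}(S,\cP,\dS),M)\to H^n(\U(S,\cP,\dS),M)$ is an isomorphism for all $n$ and all finite $M$; this is exactly goodness of $\U(S,\cP,\dS)$.

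The one genuinely delicate point, and the step I would be most careful about, is the equivariance and finiteness bookkeeping needed to legitimately invoke Serre's criterion at the $E_2$ stage: one must check that the coefficient modules $H^q(\prod_{i=1}^k\Z,M)$, which carry the conjugation action of $\U(S,\cP)$ inherited from~\eqref{relative}, extend to the asserted continuous action of $\wh{\U}(S,\cP)$ and that the comparison isomorphism on coefficients is compatible with these actions — this is where the explicit description of the kernel as generated by boundary Dehn twists, and the functoriality of~\eqref{relativeprof}, is used. Everything else is formal: the construction of the morphism of spectral sequences, the five-lemma-style argument on the pages, and the passage to the abutment. I would also remark, as the paper does, that the direct verification of exactness of~\eqref{relativeprof} can be done by hand if one prefers not to route through the goodness of $\U(S,\cP)$, but routing through it is cleaner.
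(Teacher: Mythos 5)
Your proposal is correct and follows essentially the same route as the paper: it deduces goodness of $\U(S,\cP,\dS)$ by comparing the Lyndon--Hochschild--Serre spectral sequences of the short exact sequence~\eqref{relative} and its exact profinite completion~\eqref{relativeprof}, using the goodness of $\U(S,\cP)$ from Theorem~7.4 of \cite{Hyp} and of the finitely generated free abelian kernel. The extra care you take with the exactness of~\eqref{relativeprof} and with the finiteness and continuity of the coefficient modules $H^q(\prod_{i=1}^k\Z,M)$ is exactly the content of the criterion in Section~2.6 of \cite{Serre} that the paper invokes.
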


From the geometric interpretation of the groups $\U(S,\cP,\dS)$ as fundamental groups of $K(\pi,1)$ topological spaces 
(cf.\ Section~3 in \cite{Hyp}), it also follows that $\U(S,\cP)$ is of finite virtual cohomological type in the sense of Definition~\ref{fct}. 
By the short exact sequence~\eqref{relative}, the same is then true for the relative hyperelliptic mapping class group $\U(S,\cP,\dS)$.
From Theorem~\ref{pcase} and Proposition~\ref{Hypgood}, it then follows:

\begin{theorem}\label{Hyptorsion}Let $(S,\cP,\u)$ be a marked hyperelliptic surface such that $S\ssm\cP$ has negative Euler characteristic
and $p>0$ any prime number. We then have:
\begin{enumerate}
\item The natural monomorphism $\iota\co\U(S,\cP,\dS)\hookra\wh{\U}(S,\cP,\dS)$ induces a bijection of finite sets
$\iota_c\co\cS_p(\U(S,\cP,\dS))/_{\sim}\sr{\sim}{\to}\cS_p(\wh{\U}(S,\cP,\dS))/_{\sim}$. 
\item $p$-torsion elements and finite $p$-subgroups of $\U(S,\cP,\dS)$ are hereditarily (resp.\ subgroup) conjugacy distinguished.
\item For every finite $p$-subgroup $H$ of $\U(S,\cP,\dS)$, there holds: 
\[N_{\wh{\U}(S,\cP,\dS)}(H)=\ol{N_{\U(S,\cP,\dS)}(H)}\hspace{0.4cm} \mbox{and}\hspace{0.4cm} C_{\wh{\U}(S,\cP,\dS)}(H)=\ol{C_{\U(S,\cP,\dS)}(H)}.\]
\end{enumerate}
\end{theorem}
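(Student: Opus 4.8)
The plan is to apply Theorem~\ref{pcase} to the natural monomorphism $\iota\co G\hookra R$, where $G=\U(S,\cP,\dS)$ and $R=\wh{\U}(S,\cP,\dS)$, for each prime $p>0$, and to read off its items (i)--(iii) directly. Since here $R$ is the \emph{full} profinite completion of $G$, the associated class $\cR$ of finite quotients of $G$ consists of \emph{all} finite quotients of $G$, the pro-$\cR$ topology is the full profinite topology, and ``$\cR$-conjugacy distinguished'' simply means ``conjugacy distinguished''; thus the three conclusions of Theorem~\ref{pcase} translate verbatim into the three assertions of Theorem~\ref{Hyptorsion}. Consequently the whole proof reduces to verifying the hypotheses of Theorem~\ref{pcase}.

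The first hypothesis, that $\iota$ be a monomorphism with dense image, holds because $\iota$ is a profinite completion, injectivity having been recorded in the discussion preceding Proposition~\ref{Hypgood} (it follows from the classical injectivity statement for mapping class groups together with the short exact sequence~\eqref{relative}). Next I would check that $G$ is of finite virtual $p$-cohomological type with respect to $\iota$ in the sense of Definition~\ref{fct2}. That $\U(S,\cP,\dS)$ is of finite virtual cohomological type in the sense of Definition~\ref{fct} was explained in the paragraph preceding the statement --- from the $K(\pi,1)$ model of Section~3 of \cite{Hyp} for $\U(S,\cP)$, then from the extension~\eqref{relative} --- so in particular $\vcd_p(G)<\infty$ for every $p$. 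For the second condition of Definition~\ref{fct2}, one observes that a finite discrete $p$-primary $\F_p[[R]]$-module $M$, being finite with continuous $R$-action, has its module structure factoring through a finite quotient of $R$, hence restricts along $\iota$ to a finite $\F_p[G]$-module; finiteness of all the groups $H^i(G,M)$ is then a special case of the second condition of Definition~\ref{fct}. Finally, the remaining hypothesis of Theorem~\ref{pcase} is exactly goodness: by Proposition~\ref{Hypgood} the group $G$ is good, so by Definition~\ref{good group} (taken with $\cC$ the class of all finite groups, for each prime $p$) the natural map $H^i(R,M)\to H^i(G,M)$ is an isomorphism for every discrete $\F_p[[R]]$-module $M$ and all $i\geq 0$, in particular for $i\gg 0$. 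Theorem~\ref{pcase} then gives items (i)--(iii) for every prime $p$, which is the assertion of Theorem~\ref{Hyptorsion}.

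I do not anticipate any real obstacle: the substantive input --- goodness of the relative hyperelliptic mapping class group (Proposition~\ref{Hypgood}) and the finite-dimensional $K(\pi,1)$ yielding finite virtual cohomological type --- has already been established above. The only point meriting a word of care is the compatibility between the two notions of ``finite virtual $p$-cohomological type'' in Definitions~\ref{fct} and~\ref{fct2}, namely that finite discrete $\F_p[[R]]$-modules pull back to finite $\F_p[G]$-modules along the dense embedding $\iota$; this is immediate from continuity of the module action.
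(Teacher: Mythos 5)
Your proposal is correct and takes essentially the same route as the paper, which likewise deduces Theorem~\ref{Hyptorsion} directly from Theorem~\ref{pcase}, using Proposition~\ref{Hypgood} for goodness and the $K(\pi,1)$ model of \cite{Hyp} together with the extension~\eqref{relative} for finite virtual cohomological type. Your explicit check that finite discrete $\F_p[[R]]$-modules restrict to finite $\F_p[G]$-modules (reconciling Definition~\ref{fct} with Definition~\ref{fct2}) is a detail the paper leaves implicit, and you handle it correctly.
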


\subsection{Virtually compact special groups}
A group $G$ is \emph{compact special} if it is isomorphic to the fundamental group of a compact \emph{special cube complex}, 
whose hyperplanes satisfy certain combinatorial properties (cf.\ \cite[Sec. 3]{H-W-1}).
A group $G$ is \emph{virtually compact special}, if it contains a finite index subgroup which is compact special.
These groups were defined and studied by Daniel Wise and his collaborators and play a central role in modern geometric group theory 
(cf.\ \cite{BergeronHaglundWise2011,  H-W-1, Wise_sm-c, Wise-qc-h} and many others).

Since the seminal work of Haglund and Wise \cite{H-W-1}, many hyperbolic groups have been shown to be virtually special. 
For example, Haglund and Wise \cite{HW} prove that hyperbolic Coxeter groups are virtually compact special. 
In \cite{Wise-qc-h}, Wise proved the same for finitely generated $1$-relator groups with torsion, while
Agol \cite{Agol} showed this for fundamental groups of closed hyperbolic $3$-manifolds.
In fact, Agol \cite{Agol} proved that any hyperbolic group admitting a proper cocompact action on a CAT($0$)
cube complex is virtually compact special. Bergeron, Hagung and Wise  \cite[Theorem 1.10]{BergeronHaglundWise2011} 
proved that uniform standard arithmetic subgroups of $\SO(1,n)$ are virtually compact special. 

These groups are residually finite and hereditarily conjugacy separable (cf.\ \cite[Theorem~1.1]{MZ}). Haglund and Wise \cite{H-W-1} 
proved that any virtually compact special group $G$ contains a subgroup of finite index which is a virtual retract of a Right angled 
Artin group and so by \cite[Proposition 3.8 and the proof of Proposition 3.1]{MZ} 
$G$ is good and of finite virtual cohomological type. Hence, by Corollary~B, we have the following partial refinement of \cite[Theorem~1.1]{MZ}:





\begin{theorem}\label{virtually compact special} Let $G$ be a virtually compact special group and let $\iota\co G\hookra\wh{G}$ be the
natural homomorphism. Then, for every prime $p>0$, there holds:
\begin{enumerate}
\item $\iota$ induces a bijection of finite sets $\iota_c\co\cS_p(G)/_{\sim}\sr{\sim}{\to}\cS_p(\widehat G)/_{\sim}$.  
\item Finite $p$-subgroups  of $G$ are hereditarily subgroup conjugacy distinguished.
\item For every finite $p$-subgroup $H$ of $G$, we have the identities $N_{\widehat G}(H)=\ol{N_G(H)}$, $C_{G_\wC}(H)=\ol{C_G(H)}$.
\end{enumerate}
\end{theorem}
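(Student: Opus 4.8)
The plan is to deduce Theorem~\ref{virtually compact special} directly from Corollary~B, so the work reduces to verifying the hypotheses of that corollary for a virtually compact special group $G$ and for each prime $p>0$. Recall that Corollary~B requires $G$ to be finitely generated, $p$-good, residually $p$... — actually here we should invoke the full-completion version (Corollary~B applied with $\cC$ the class of all finite groups, or equivalently Theorem~A with $R=\wh G$): we need $G$ residually finite, good, and of finite virtual cohomological type. So first I would record that virtually compact special groups are residually finite (they are virtually special, and special cube complex groups embed in right-angled Artin groups, which are residually finite and even linear). Second, I would cite the structural input from Haglund--Wise \cite{H-W-1}: a virtually compact special $G$ contains a finite-index subgroup which is a virtual retract of a right-angled Artin group $A$. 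Since right-angled Artin groups are of type $\FP_\infty$, have finite cohomological dimension, and are good (being, e.g., fundamental groups of finite aspherical cube complexes, and goodness passing to virtual retracts by \cite[Prop.~3.8]{MZ}), one concludes via \cite[proof of Prop.~3.1]{MZ} that $G$ itself is good and of finite virtual cohomological type in the sense of Definition~\ref{fct}.

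With these two facts in hand, the argument is immediate: fix a prime $p>0$. Goodness of $G$ implies in particular $p$-goodness, i.e. $\iota^\ast\co H^i(\wh G,M)\to H^i(G,M)$ is an isomorphism for every discrete $\F_p[[\wh G]]$-module $M$ and all $i\geq 0$, hence a fortiori for $i\gg 0$; and finite virtual cohomological type gives finite virtual $p$-cohomological type, so $G$ is of finite virtual $p$-cohomological type with respect to $\iota\co G\hookra\wh G$ (here $R=\wh G$ and the module finiteness condition of Definition~\ref{fct2} follows from condition (ii) of Definition~\ref{fct} since a finite discrete $\F_p[[\wh G]]$-module is in particular a finite $\F_p[G]$-module). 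Thus the hypotheses of Theorem~\ref{pcase} are satisfied with $R=\wh G$. Applying Theorem~\ref{pcase} yields exactly items (i), (ii), (iii) of Theorem~\ref{virtually compact special}: the bijection $\iota_c\co\cS_p(G)/_\sim\sr{\sim}{\to}\cS_p(\wh G)/_\sim$ from item~(i) of Theorem~\ref{pcase}, the hereditary subgroup conjugacy distinguishedness of finite $p$-subgroups from item~(ii), and the normalizer and centralizer identities $N_{\wh G}(H)=\ol{N_G(H)}$, $C_{\wh G}(H)=\ol{C_G(H)}$ from item~(iii). (The stray $C_{G_\wC}(H)$ in the statement is just $C_{\wh G}(H)$, taking $\cC$ to be all finite groups.)

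The only genuinely substantive point — and the one I would present most carefully — is the citation bookkeeping for goodness and finite virtual cohomological type: one must check that the virtual retract property of \cite{H-W-1} together with \cite[Proposition~3.8 and the proof of Proposition~3.1]{MZ} really delivers both goodness \emph{and} the finiteness/dimension conditions of Definition~\ref{fct}, not just goodness. Right-angled Artin groups are $\FP_\infty$ over $\Z$ (Salvetti complex is finite and aspherical) and have cohomological dimension equal to the clique number of the defining graph, and both properties descend to retracts and are inherited, virtually, by finite-index overgroups; combined with residual finiteness of $G$ this is all that Definition~\ref{fct} asks. I do not expect any serious obstacle beyond assembling these known facts, since the heavy lifting is done by Theorem~\ref{pcase}; the proof is essentially a one-line deduction once the input hypotheses are in place.
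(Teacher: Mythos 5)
Your proposal is correct and follows essentially the same route as the paper: cite Haglund--Wise together with \cite[Proposition~3.8 and the proof of Proposition~3.1]{MZ} to get that $G$ is residually finite, good and of finite virtual cohomological type, and then apply Theorem~A/Theorem~\ref{pcase} with $R=\wh{G}$ (the paper phrases this as an application of Corollary~B with $\cC$ all finite groups). Your remark that one really invokes the full-profinite-completion version rather than the residually-$p$ form of Corollary~B is a fair and accurate reading of the intended argument.
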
 

If we assume, moreover, that the compact special finite index subgroup of $G$ is toral relatively hyperbolic, 
we have the following stronger result:

\begin{theorem}\label{GVCSHRNRH}Let $G$ be a group which contains a compact special and toral relatively hyperbolic subgroup of finite index
and let $\iota\co G\hookra\wh{G}$ be the natural homomorphism.
\begin{enumerate}
\item Finite subgroups  of $G$ are hereditarily subgroup conjugacy distinguished. In particular, the homomorphism
$\iota$ induces an injective map $\iota_f\co\cS_f(G)/_{\sim}\hookra\cS_f(\widehat G)/_{\sim}$.
\item For every finite subgroup $H$ of $G$, we have that $N_{\widehat G}(H)=\ol{N_G(H)}\cong\wh{N_G(H)}$ and, 
for every finitely generated subgroup $L$ of $G$, we have that $C_{\widehat G}(L)=\ol{C_G(L)}$.
\item $\iota$ induces a bijection of finite sets $\iota_s\co\cS_{f\!s}(G)/_{\sim}\sr{\sim}{\to}\cS_{f\!s}(\widehat G)/_{\sim}$.  
\end{enumerate}
\end{theorem}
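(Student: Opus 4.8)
The plan is to deduce Theorem~\ref{GVCSHRNRH} from Corollary~D, the bulk of the work being to verify the hypotheses of that corollary for $G$ and then to read off the three statements. As recalled before Theorem~\ref{virtually compact special}, a virtually compact special group is residually finite, good and of finite virtual cohomological type --- it has a finite-index subgroup which is a virtual retract of a right-angled Artin group by \cite{H-W-1}, whence goodness and finiteness of cohomology as in \cite{MZ} --- and it is hereditarily conjugacy separable by \cite{MZ}. Thus $G$ satisfies hypothesis~(i) of Corollary~D, and, every element of $G$ being hereditarily conjugacy distinguished, Lemma~\ref{centcriterion} already gives $C_{\wh G}(x)=\ol{C_G(x)}$ for \emph{every} $x\in G$.

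The geometric heart of the argument is the claim that, for every finite subgroup $H$ of $G$, the normalizer $N_G(H)$ --- and, for every finitely generated subgroup $L$ of $G$, the centralizer $C_G(L)$ --- is a \emph{virtual retract} of $G$. Granting this, each such subgroup is itself virtually compact special, hence good and of finite virtual cohomological type, and, being a virtual retract of a residually finite group, is separable and efficiently embedded, so that $\ol{N_G(H)}\cong\wh{N_G(H)}$ and $\ol{C_G(L)}\cong\wh{C_G(L)}$ inside $\wh G$; in particular hypothesis~(ii) of Corollary~D holds. To prove the claim I would use that $G$ is virtually a compact special \emph{toral relatively hyperbolic} group $G_0$, which may be taken normal and torsion-free: the centralizer in $G_0$ of an infinite-order element is either virtually cyclic or a (free abelian) peripheral subgroup, hence relatively quasiconvex; and $N_G(H)$ is commensurable with $C_{G_0}(H)$, i.e.\ with the fixed subgroup of the finite group $H$ acting by conjugation on $G_0$, which one shows is again relatively quasiconvex --- here it is essential that the peripheral subgroups are free abelian, so that no nontrivial finite subgroup is parabolic. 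One then invokes the fact, available for virtually compact special toral relatively hyperbolic groups through the work of Agol and Wise together with \cite{H-W-1}, that relatively quasiconvex subgroups are virtual retracts. \textbf{Establishing the relative quasiconvexity of $N_G(H)$, equivalently of the fixed subgroup $G_0^H$, is the main obstacle}; everything else is assembly.

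With both hypotheses of Corollary~D verified, part~(3) is exactly Corollary~D(i). For the centralizer statement in part~(2), write $L=\langle l_1,\dots,l_k\rangle$, so that $C_G(L)=\bigcap_i C_G(l_i)$ and $C_{\wh G}(L)=\bigcap_i C_{\wh G}(l_i)=\bigcap_i\ol{C_G(l_i)}$ by the first paragraph; since each $C_G(l_i)$ is a virtual retract of $G$, the standard fact that finite intersections of cosets of virtual retracts are closed and that their closures commute with intersection yields $\bigcap_i\ol{C_G(l_i)}=\ol{\bigcap_i C_G(l_i)}=\ol{C_G(L)}$. For the normalizer statement in part~(2), let $H$ be a finite subgroup and $\bar g\in N_{\wh G}(H)$; conjugation by $\bar g$ induces an automorphism $\phi$ of $H$, and for each generator $x$ of $H$ one has $\phi(x)=\bar g x\bar g^{-1}\in H\subseteq G$, so by hereditary conjugacy separability of $G$ there is $g_x\in G$ with $g_x x g_x^{-1}=\phi(x)$, whence $\bar g\in g_x\,\ol{C_G(x)}$. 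Using again that the $C_G(x)$ are virtual retracts, the intersection $\bigcap_x g_x C_G(x)$ meets $G$, producing $g\in N_G(H)$ inducing $\phi$; then $\bar g g^{-1}\in C_{\wh G}(H)=\ol{C_G(H)}$, so $\bar g\in\ol{C_G(H)}\,N_G(H)=\ol{N_G(H)}$, proving $N_{\wh G}(H)=\ol{N_G(H)}$, which together with the virtual-retract property also gives $N_{\wh G}(H)\cong\wh{N_G(H)}$.

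Finally, for part~(1): if finite subgroups $H$ and $K$ of $G$ are conjugate in $\wh G$, say $K=\bar g H\bar g^{-1}$, the same reasoning as above --- hereditary conjugacy separability to conjugate the generators of $H$ into $K$ element by element, then the virtual-retract property of the relevant centralizers to produce a common conjugator lying in $G$ --- shows that $H$ and $K$ are already conjugate in $G$, and the identical argument applied over $\cR$-open subgroups shows that $H$ is hereditarily subgroup conjugacy distinguished (alternatively, combine the equality $N_{\wh G}(H)=\ol{N_G(H)}$ just established with Lemma~\ref{normcriterion}). Consequently $\iota_f$ is injective, since distinct conjugacy classes of finite subgroups of $G$ have disjoint images in $\cS_f(\wh G)$, and the proof is complete.
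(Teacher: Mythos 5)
Your proposal does not follow the paper's route, and as written it has two genuine gaps. First, the geometric input. You reduce everything to the claim that $N_G(H)$ (for $H$ finite) and $C_G(L)$ (for $L$ finitely generated) are virtual retracts of $G$, via relative quasiconvexity of the fixed subgroup $G_0^H$, and you yourself flag the quasiconvexity of $G_0^H$ as ``the main obstacle'' without proving it; you then appeal to an unreferenced assertion that relatively quasiconvex subgroups of virtually compact special toral relatively hyperbolic groups are virtual retracts, which does not follow from the sources you name. The paper needs neither claim: relative quasiconvexity of $C_G(L)=\bigcap_i C_G(x_i)$ (and hence of $C_G(H)$, of finite index in $N_G(H)$) is immediate from \cite[Theorem 1.1]{MinasyanOsin2012} together with \cite[Prop. 4.18]{O}, and all that is then used is separability of relatively quasiconvex subgroups and closedness of a product of \emph{two} of them, \cite[Theorems 4.7 and 4.8]{GM}; for item (iii) the remaining hypotheses of Corollary~D are checked via \cite{D} (type $F$, hence finiteness of cohomology) and \cite{E} together with \cite[Theorem 3.8]{GJZ} (goodness of $C_G(H)$). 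So the obstacle you defer has a short solution in the literature that your write-up does not supply, and the stronger virtual-retract statement you lean on is unjustified.

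Second, the common-conjugator step. In your proofs of (i) and of $N_{\widehat G}(H)=\overline{N_G(H)}$ you conjugate the elements of $H$ one at a time and then claim that, since $\bar g\in\bigcap_x g_x\overline{C_G(x)}$, the intersection $\bigcap_x g_xC_G(x)$ must meet $G$, citing a ``standard fact'' that closures of cosets of virtual retracts commute with finite intersections. This is not standard and is exactly the hard point: each coset being closed in $G$ is easy, but that does not give $\bigcap_x g_x\overline{C_G(x)}=\overline{\bigcap_x g_xC_G(x)}$; for two cosets this amounts to separability of a product of the two subgroups, and for three or more it is a strictly stronger multi-coset statement that neither \cite{GM} nor a virtual-retract property obviously yields. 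The paper sidesteps this by an induction on $|H|$ that only ever needs a product of \emph{two} relatively quasiconvex subgroups: a maximal $p$-subgroup $A\leq H$ is first aligned using Theorem~\ref{virtually compact special}, so the profinite conjugator may be assumed to centralize $A$, and then for $h\in H\smallsetminus A$ conjugacy separability \cite{MZ} places the conjugator in $\overline{C_G(h)}\,\overline{C_G(A)}\cap G=C_G(h)C_G(A)$ by \cite[Theorem 4.8]{GM}. Until you prove your multi-coset separability claim (or restructure along these lines), parts (i) and the normalizer half of (ii) are unproven; note also that your ``alternative'' via Lemma~\ref{normcriterion} still presupposes that $H$ is subgroup conjugacy distinguished, i.e.\ the same unproven step. (Your chain $\bigcap_i\overline{C_G(x_i)}=\overline{C_G(L)}$ in (ii) raises the same commutation issue, though there the paper states the equality equally tersely.)
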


\begin{proof}(i): If $G$ contains a compact special and toral relatively hyperbolic subgroup of finite index, then every finite index subgroup of $G$
has the same property. By Lemma~\ref{normcriterion}, in order to prove item (i) of the theorem, it is then enough to show that, in a group which
satisfies the hypotheses of the theorem, finite subgroups are subgroup conjugacy distinguished. 

Let then $H$ be a finite subgroup of $G$ and let us show that it is subgroup conjugacy distinguished in $G$. We will proceed by induction 
on the order $|H|$ of the subgroup. The base for the induction is provided by item (i) of Theorem~\ref{virtually compact special}, 
by which, finite subgroups of $G$ of prime power order are subgroup conjugacy distinguished. 
	
Let us assume now that $p\divides |H|$ and $|H|>p$, for a prime $p>0$. By hypothesis, $G$ is virtually torsion free.  Let then $U$ be a finite index torsion free 
normal subgroup of $G$. Since the conjugacy orbit $H^{\wh{G}}$ is a finite union of conjugates of the conjugacy orbit $H^{\wh{U}}$, in order 
to prove that $H$ is subgroup conjugacy distinguished in $G$, it is enough to prove that $H$ is subgroup conjugacy distinguished in $UH=U\rtimes H$. 

Let us then assume that $G=UH$ and that $H_1$ is a finite subgroup of $G$ such that $H^{\gamma}=H_1$ for some 
$\gamma\in \widehat U$. Let us then prove that there is a $g\in G$ with the same property. 

Let $A$ be a maximal $p$-subgroup of $H$.
By (i) of Theorem~\ref{virtually compact special}, we have that $A^{\gamma }=A^x$, for some $x\in G$. 
Since $x=uh=hu'$, for $u,u'\in U$ and $h\in H$, by the series of natural isomorphisms:
\[AU/U\cong A\widehat U/\widehat U \cong A^{\gamma} \widehat U/\widehat U= A^xU/U=A^{h}U/U,\] 
we see that the element $h$ normalizes $A$ and so $A^u=A^\gamma$. 
Hence, after replacing $H_1$ by $H_1^{u^{-1}}$ and $\gamma$ with $\gamma u^{-1}$  we may assume that 
$\gamma\in N_{\widehat U}(A)=C_{\widehat U}(A)$. 
	
Let $h\in H\ssm A$. Then $h^\gamma\in H_1\subset G$ and, since $G$ is conjugacy separable (cf.\ \cite[Theorem~1.1]{MZ}), there holds
$h^\gamma=h^g$ for some $g\in G$, so that $g\in C_{\widehat G}(h)\gamma$. 
Since $\gamma\in C_{\widehat U}(A)\subseteq C_{\widehat G}(A)$, we then have that $g\in C_{\widehat G}(h) C_{\widehat G}(A)\cap G$.

By Theorem~1.1 in \cite{MZ} and Lemma~\ref{centcriterion}, we have that $C_{\widehat G}(h)=\overline{C_G(h)}$ and, by (iii) of 
Theorem~\ref{virtually compact special}, we have that $C_{\widehat G}(A)=\overline{C_G(A)}$. Hence,
$g\in \overline{C_G(h)}\overline{C_G(A)}\cap G$. 
     	
By \cite[Theorem 1.1]{MinasyanOsin2012}, the centralizer $C_G(x)$ of any element $x\in G$ is relatively quasiconvex. 
Since the intersection of any two relatively quasiconvex subgroups is relatively quasiconvex (cf.\ \cite[Prop. 4.18]{O}), if $L$ is a finitely generated
subgroup of $G$ (in particular, if $L$ is finite), we have that $C_G(L)$ is relatively quasiconvex. 

We now apply \cite[Theorem 4.8]{GM} which states that the product of two relatively quasiconvex subgroups is closed in the profinite topology, so that
$\overline{C_G(h)}\overline{C_G(A)}\cap G= C_{G}(h)C_{ G}(A)$. We then conclude that $g=c_hc_A$ for some $c_h\in C_G(h)$ and $c_A\in C_G(A)$. 
Hence, $h^{c_A}=h^\gamma$  and then $H^{c_A}=H_1$, for $c_A\in G$,  as we had to show.
\smallskip

\noindent
(ii) From the previous item and Lemma \ref{normcriterion} follows that $N_G(H)$ is dense in $N_{\widehat G}(H)$. Let us then show that $\ol{N_G(H)}$ 
coincides with the profinite completion $\widehat{N_G(H)}$, i.e.\ that the profinite topology of $G$ induces the full profinite topology on $N_G(H)$. 
Since $C_G(H)$ is of finite index (and closed) in $N_G(H)$, it suffices to show that every finite index subgroup of $C_G(H)$ is closed in 
the profinite topology of $G$. As mentioned above, $C_G(H)$ is relatively quasiconvex and this property is closed for commensurability.
By \cite[Theorem 4.7]{GM}, relatively quasiconvex subgroups are closed in the profinite topology of $G$. Hence, every finite index subgroup 
of $C_G(H)$ (and so of $N_G(H)$) is closed in the profinite topology of $G$. This concludes the proof that  $\overline{N_G(H)}=\widehat{N_G(H)}$. 

For the statement about centralizers, it is enough to observe
that since, as we already observed, by Theorem~1.1 in \cite{MZ} and Lemma~\ref{centcriterion}, $C_{\widehat G}(x)=\overline{C_G(x)}$
for all $x\in G$. So that, for every finitely generated subgroup $L$ of $G$, with set of generators $\{x_1,\dots,x_k\}$, we have:
\[C_{\wh{G}}(L)=\bigcap_{i=1}^kC_{\wh{G}}(x_i)=\bigcap_{i=1}^k\ol{C_G(x_i)}=\ol{C_G(L)}.\]
\smallskip

\noindent
(iii): By the remarks preceding Theorem~\ref{virtually compact special}, the group $G$ is residually finite, good and of finite virtual cohomological type.
Moreover, for every  finite (in particular every finite solvable) subgroup $H$ of $G$, there is a 
natural isomorphism $\wh{N_G(H)}\cong N_{\widehat G}(H)$. Hence, to apply Corollary~D, we need to show that $N_G(H)$ satisfies 
hypothesis (i) of Corollary~D.  

Since both goodness and finiteness of virtual cohomological type are closed for commensurability (for the latter, see Lemma \ref{equivfct}), 
it suffices to prove that both properties are satisfied by $C_G(H)$. As mentioned above, $C_G(H)$ is relatively quasiconvex and so, in particular, it is a 
virtually toral relatively hyperbolic group. By \cite{D}, virtually toral relatively hyperbolic groups are of type $F$. In particular, the cohomology of $C_G(H)$ 
with coefficients in finite modules is finite. By \cite[Theorem 1]{E}, a finite index subgroup $G_0$ of $G$ has quasiconvex hierarchy and therefore 
$G_0\cap C_G(H)$ has the induced quasiconvex hierarchy, in particular this hierarchy is separable (i.e.\ closed in the profinite topology), since 
quasiconvex subgroups are separable (as we explained above). By \cite[Theorem 3.8]{GJZ}, the group $G_0\cap C_G(H)$ is then good 
and therefore $C_G(H)$ has the same property.  Thus, all hypotheses of Corollary~D are satisfied and we can conclude.
\end{proof}

\end{document}